\newtheorem{theorem}{Theorem}[section]
\theoremstyle{plain}
\newtheorem{lemma}{Lemma}[section]
\newtheorem{proposition}{Proposition}[section]
\numberwithin{equation}{section}
\theoremstyle{definition}
\theoremstyle{remark}
\newtheorem{remark}{Remark}[section]
\def\pd{\partial}
\def\re{\mathbb{R}}
\def\mbb{\mathbb}
\newcommand{\osc}{\mathrm{osc}}
\title[Doubling and the critical phase]{Doubling and the two-dimensional critical valued Lagrangian phase}
\author{Arunima Bhattacharya, Ravi Shankar, and Jeremy Wall}
\address{Department of Mathematics, Phillips Hall\\
 the University of North Carolina at Chapel Hill, NC }
\email{arunimab@unc.edu}
\address{Department of Mathematics, Fine Hall\\
Princeton University, Princeton, NJ}
\email{rs1838@princeton.edu}
\address{Department of Mathematics, Phillips Hall\\
 the University of North Carolina at Chapel Hill, NC }
\email{jwall2@unc.edu}
\begin{document}

\begin{abstract}
In this paper, we establish interior Hessian and gradient estimates for the two-dimensional Lagrangian mean curvature equation when the phase changes signs, provided the gradient of the phase vanishes along its zero set. At the critical phase in two dimensions, the Jacobi inequality degenerates, preventing the use of higher-dimensional methods to obtain Hessian estimates. To address this difficulty, we introduce a modified doubling technique that applies to degenerate Jacobi inequalities and yields interior estimates. 
\end{abstract}

\maketitle

\section{Introduction}
We study the two-dimensional Lagrangian mean curvature equation
\begin{equation}
    \arctan \lambda_{1}+\arctan \lambda_{2}=\Theta(x) \label{s}
\end{equation} 
where $\Theta$ has bounded second derivatives. Here, $\lambda_1$ and $\lambda_2$ are the eigenvalues of the Hessian $D^2u$, and the function $\Theta$ is called the Lagrangian phase or angle of the Lagrangian submanifold  $L=(x, Du(x))\subset \mathbb{C}^2$. As shown by Harvey-Lawson \cite[Proposition 2.17]{HL}, $\Theta$ acts as the potential of the mean curvature vector of $(x,Du(x))$: 
\begin{equation*}
    \vec H=J\nabla_g\Theta,
\end{equation*}
where $g=I_2+(D^2u)^2$ is the induced metric on $L$, and $J$ is the almost complex structure on $\mbb C^2$. Thus, equation \eqref{s} is the potential equation for prescribed \textit{Lagrangian mean curvature}. When the Lagrangian phase is constant, $u$ solves the \textit{special Lagrangian equation} of Harvey-Lawson \cite{HL}. In this case, $H=0$, and $L$ is a volume-minimizing Lagrangian submanifold. 

An equivalent form of \eqref{s} can be written using the symmetric polynomials:
\begin{equation}\label{tanpde}
    \tan(\Theta) = \frac{\sigma_1}{1 - \sigma_2}
\end{equation}
where $\sigma_1(D^2u) = \Delta u = \lambda_1+\lambda_2$ and $\sigma_2(D^2u) = \det D^2u = \lambda_1\lambda_2$.

Our main results are the following.
    \begin{theorem}\label{thm1}
        Let $u$ be a smooth solution of \eqref{s} in $B_R(0)\subset\re^2$, where $-\pi<\Theta < \pi$, $D\Theta=0$ on the level set $\{\Theta = 0\}$ and $\Theta\in C^2(B_R(0))$. Then the Hessian of $u$ admits the bound
        \[
        |D^2u(0)|\leq C(\|u\|_{C^1(B_R(0))},\|\Theta\|_{C^2(B_R(0))},R).
        \]
    \end{theorem}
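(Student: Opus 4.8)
The plan is to adapt the Warren--Yuan and Bhattacharya scheme for Lagrangian Hessian estimates to the degenerate two-dimensional setting. By scaling we may take $R = 1$, and since the argument is translation-invariant it suffices to bound the logarithmic volume density
\[
b \;=\; \tfrac12\log\det\!\big(I_2 + (D^2u)^2\big) \;=\; -\log(\cos\theta_1\cos\theta_2),\qquad \theta_i := \arctan\lambda_i,\ \ \theta_1+\theta_2 = \Theta,
\]
on $B_{1/2}$; as $|D^2u| \le e^{b}$, this gives the stated bound. All computations are carried out on the Lagrangian graph $L = \{(x,Du(x))\}$ with induced metric $g = I_2 + (D^2u)^2$, on which $\Delta_g$ has the divergence form $\Delta_g v = \tfrac{1}{\sqrt{\det g}}\,\partial_i\!\big(\sqrt{\det g}\,g^{ij}\partial_j v\big)$, and on which, by $\vec H = J\nabla_g\Theta$, the coordinate functions obey $|\Delta_g x_k| \le |\nabla_g\Theta| \le |D\Theta|$.

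First I would record the Jacobi-type inequality for $b$: differentiating \eqref{s} twice along $L$ and using the Codazzi relations gives, exactly as in higher dimensions, a pointwise inequality of the schematic form
\[
\Delta_g b \;\ge\; \kappa(\Theta)\,|\nabla_g b|^2 \;-\; C\big(|\nabla_g\Theta|^2 + |\nabla_g^2\Theta|\big), \qquad \kappa(\Theta) \ge 0 .
\]
The obstruction stressed in the abstract is that in dimension two $\kappa$ vanishes precisely at the critical phase $\Theta = 0$ — it is bounded below only by a fixed positive power of $|\Theta|$ — so near the zero set $Z = \{\Theta = 0\}$ the inequality retains no usable gradient term, and the higher-dimensional iteration breaks down. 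What saves the estimate is the hypothesis $D\Theta = 0$ on $Z$ together with $\Theta \in C^2$: Taylor expansion at the nearest point of $Z$ yields
\[
|\Theta(x)| \le \tfrac12\|\Theta\|_{C^2}\, d(x)^2, \qquad |D\Theta(x)| \le \|\Theta\|_{C^2}\, d(x), \qquad d(x) := \mathrm{dist}(x,Z),
\]
so that the degeneracy of $\kappa$ is confined to a thin neighbourhood of $Z$ on which the forcing term $|\nabla_g\Theta|^2 + |\nabla_g^2\Theta|$ and the mean curvature $|\vec H| = |\nabla_g\Theta|$ of $L$ are themselves small in $d$.

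The core of the proof is the modified doubling argument that converts this interplay into a bound. Rather than maximizing a single cutoff-weighted quantity such as $\zeta^2 b$ over $B_1$ — which, because $\kappa$ degenerates, only gives a bound that blows up as $x \to Z$ — I would run a dyadic dichotomy anchored to the distance $d$. Fixing $p \in B_{1/2}$, at each dyadic scale $r$ with $B_{2r}(p) \subset B_1$ one shows that \emph{either} (i) $|\Theta| \gtrsim r^2$ on a definite portion of $B_r(p)$, in which case $\kappa(\Theta)$ is bounded below by a fixed power of $r^2$ there, and feeding the non-degenerate form of the Jacobi inequality into the divergence structure of $\Delta_g$, together with a localized Michael--Simon/Sobolev inequality for submanifolds of mean curvature $O(r)$, yields $\sup_{B_r(p)} b \le (1 + \epsilon_r)\sup_{B_{2r}(p)} b + C_r$ with scale-dependent but summable losses $\epsilon_r, C_r$ — the summability being exactly what $|\Theta| \lesssim d^2$ and $|D\Theta| \lesssim d$ buy; \emph{or} (ii) $|\Theta| \lesssim r^2$ throughout $B_r(p)$, whence $|\Delta u| = |\sigma_1| = |1 - \sigma_2|\,|\tan\Theta| \lesssim r^2\big(1 + \sup_{B_r(p)}|\det D^2u|\big)$, and an almost-harmonic interior estimate, valid once $r$ is small enough that the $r^2$ factor absorbs the a priori unknown Hessian, bounds $b$ on $B_{r/2}(p)$ outright. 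Iterating this dichotomy downward from scale $1$ — the ``doubling'' — the estimates of type (i) telescope and the process terminates in alternative (ii), producing $b(p) \le C(\|u\|_{C^1},\|\Theta\|_{C^2})$ uniformly for $p \in B_{1/2}$, which is the claim.

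I expect the main obstacle to be alternative (i): extracting a genuine multiplicative gain from the Jacobi inequality at each scale even though $\kappa(\Theta)$ there is only a small power of $r$, and — decisively — controlling every constant through the infinite iteration so that $\sum_r(\epsilon_r + C_r) < \infty$, which forces a careful balancing of the degeneracy rate of $\kappa$ against the Taylor bounds $|\Theta| \lesssim d^2$, $|D\Theta| \lesssim d$. A secondary difficulty is that $L$ is not minimal but only of prescribed mean curvature $J\nabla_g\Theta$, so the Sobolev and mean-value ingredients must be used in their forms carrying a mean-curvature error term, which the bound $|D\Theta| \lesssim d$ keeps under control near $Z$.
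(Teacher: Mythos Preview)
Your proposal has a genuine gap at the termination step. In alternative (ii) you claim that when $|\Theta|\lesssim r^2$ on $B_r(p)$, an almost-harmonic interior estimate bounds $b$ once $r$ is small enough for the $r^2$ factor to absorb the Hessian. But near $\Theta=0$ the large-Hessian regime is precisely $\lambda_1\approx-\lambda_2$ with both large, so $|1-\sigma_2|=1+|\lambda_1\lambda_2|\sim|D^2u|^2$ and $|\Delta u|=|\tan\Theta|\,|1-\sigma_2|\lesssim r^2|D^2u|^2$ is \emph{quadratic} in the unknown; choosing $r$ small enough to absorb it already presupposes the bound you seek, and even then a merely bounded Laplacian does not yield pointwise $C^2$ control. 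Without a working (ii), the type-(i) inequalities $\sup_{B_r}b\le(1+\epsilon_r)\sup_{B_{2r}}b+C_r$ telescope only to $b(p)\le C\sup_{B_1}b+C$, which is vacuous. The introduction in fact warns that direct integral techniques fail here because the linearized operator shrinks like $(1+\lambda_n^2)^{-1}$ against good terms that are only $O(1)$.

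The paper's route is quite different. Its doubling (Lemma~\ref{doubling}) runs in the \emph{opposite} direction, $\sup_{B_r}b\le C\sup_{B_{r/2}}b$, and is obtained not by Michael--Simon/Sobolev inequalities but by a Pogorelov-type pointwise maximum principle for the test function $P=\nu\log\rho+\alpha(x\cdot Du-u)+\tfrac{\beta}{2}|Du|^2+\log\max(\bar b,\gamma^{-1})$, with a case split according to whether $(\sin|\Theta|)\,\bar b$ exceeds $\bar b^{2/3}$. The seed for this doubling is then supplied not by any almost-harmonic estimate but by a compactness argument: a hypothetical blow-up sequence converges to a viscosity solution; a new Alexandrov-type theorem (resting on a scale-invariant gradient estimate and an integral volume bound, Section~\ref{sec_alexandrv}) furnishes a point of twice differentiability; Savin--Fan small perturbation upgrades this to uniform $C^{2,\alpha}$ control of the sequence on a small ball; and the doubling lemma propagates that control to the origin, contradicting the blow-up.
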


    \begin{remark}
        Combining this with the gradient estimate in Lemma \ref{gradlem}, we obtain
        \[
        |D^2u(0)|\leq C(\osc_{B_R(0)}u,\|\Theta\|_{C^2(B_R(0))},R).
        \]
    \end{remark}

\begin{remark}
A special case of Theorem \ref{thm1} shows that if $u$ is a smooth solution of \eqref{s} in $B_R(0)\subset \mathbb R^2$, $0 \leq |\Theta| < \pi$, and $\Theta\in C^2(B_R(0))$, then the Hessian of $u$ admits the bound
\[
|D^2u(0)| \leq C\!\left( \|u\|_{C^1(B_R(0))}, \|\Theta\|_{C^2(B_R(0))}, R \right).
\]
\end{remark}

  \begin{remark}
       If $0\le \Theta<\pi$ is nonnegative and $C^2(B_R(0))$, we find that there exists a solution $u\in C^3(\Omega)\cap C^0(\bar{\Omega})$ to the Dirichlet problem 
     \begin{equation*}\label{dbvp_1}
        \begin{cases}
             \arctan\lambda_1 +\arctan\lambda_2 = \Theta(x) \text{ in } \Omega\\
            u = \phi \text{ on } \pd\Omega
        \end{cases}
        \end{equation*}
    where $\phi\in C^0(\pd \Omega)$.  Here $\Omega$ is a uniformly convex bounded domain in $\mathbb{R}^2.$  This follows from classical solvability for positive phases $\Theta\ge \delta$ in \cite{AB1}, combined with the above interior Hessian estimate, the interior gradient estimate in \cite{BMS}, and the interior $C^{2,\alpha}$ estimate in \cite[Section 7.1]{BSWY25}.
    \end{remark}

    The range of the Lagrangian phase determines the concavity of the arctangent operator in \eqref{s}. In dimension $n$, the phase is hypercritical whenever $|\Theta|\geq (n-1)\pi/2$. In this case, the potential $u$ is convex, and the arctangent operator is concave. The phase is supercritical whenever $|\Theta|\geq (n-2)\pi/2 + \delta$ for some fixed $\delta > 0$. In this case, the eigenvalues could have opposite signs, but the smaller magnitude eigenvalue is bounded by a constant depending on $\delta$ as shown by Yuan \cite{YY0}. In the supercritical range, the arctangent operator can be modified to a concave operator by exponentiating it, see \cite{CPX} and \cite{CW2}. The phase is critical and supercritical whenever $|\Theta|\geq (n-2)\pi/2$. The level set $\{\lambda\in\re^n: \sum \arctan\lambda_j = \Theta\}$ is convex only when $|\Theta|\geq (n-2)\pi/2$ \cite{YY0}. In the subcritical case, i.e., when $|\Theta|< (n-2)\frac{\pi}{2}$, counterexamples to Hessian estimates exist, and the Dirichlet boundary value problem is not classically solvable with smooth boundary data as shown by Nadirashvili-Vl\u{a}du\c{t} \cite{NV}, Wang-Yuan \cite{WangY}, and Mooney-Savin \cite{MooneySavin}.

    Hessian and gradient estimates for the special Lagrangian equation in two dimensions with critical and supercritical phase were first proven by Warren-Yuan \cite{MY2009}. In higher dimensions, Hessian estimates have been shown for the special Lagrangian equation by Warren-Yuan \cite{WY}, Wang-Yuan \cite{WaY}, Li \cite{Lcomp}, and most recently, using the doubling method by Shankar \cite{RSsLag}. Hessian estimates for the Lagrangian mean curvature equation with critical and supercritical phase $\Theta\in C^{1,1}$ was shown by Bhattacharya \cite{AB, AB2d}. See also Lu \cite{Siyuan}. Hessian estimates were later extended by Zhou \cite{Zhou1} to a supercritical Lipschitz continuous phase. Hessian estimates for critical and supercritical Lipschitz continuous phase were shown by Ding \cite{Ding1} using geometric measure theoretic tools.

    The difficulty in proving Hessian estimates for the Lagrangian mean curvature equation in two dimensions comes from the fact that when $\Theta$ is zero, the function $u$ becomes harmonic. Harmonic functions, while known to be regular, do not satisfy a Jacobi inequality and hence, the Jacobi inequality must degenerate as $\Theta$ approaches zero. This degeneracy prevents us from applying the direct integral techniques used to prove Hessian estimates in higher dimensions $n \geq 3$.  A similar degeneration of the Jacobi inequality occurs for the $\sigma_2=1$ equation in dimension four \cite{RY3}. We would like to point out that while the Jacobi inequality given in \cite[(2.2)]{AB2d} remains valid in the supercritical case, once the phase touches the critical value, the constant $c(2)$ is no longer purely dimension-dependent. This phenomenon is specific to dimension two only.

   A second difficulty is that when $\Theta$ crosses the critical value, both eigenvalues may change sign. Previous works avoided this by assuming either $\Theta>0$ or $\Theta<0$, where the estimates applied separately. Pointwise, however, one can reduce to the symmetric opposite case since the potential $u$ also satisfies
    \[
    \arctan(-\lambda_1) + \arctan(-\lambda_2) = -\Theta(x).
    \]
    The pointwise calculations are shown invariant under this sign change, so a symmetry argument takes care of negative phase values.  The condition that $D\Theta = 0$ on the level set $\{\Theta = 0\}$ allows us to use the pointwise interpolation inequality \cite[Lemma 7.7.2]{lhorm1}. This assumption on $\Theta$ means that the function $g(x) = \max\{0,\Theta(x)\}$ (or $g(x) = \max\{0,-\Theta(x)\}$) will be a nonnegative $C^{1,1}$ function for which the interpolation inequality still holds.

The doubling method of Shankar-Yuan \cite{RY3}, used to obtain Hessian estimates for the $\sigma_2$ equation in dimension four, fails for \eqref{s} because the entries of the linearized $\sigma_2$ operator behave like $O(|D^2u|)$ near degeneracy, whereas the arctangent operator behaves like $O(\frac{1}{1+\lambda_n^2})$, so it shrinks. The good terms are only $O(1)$, while the bad terms are $O(\frac{1}{1+\lambda_n^2}/\rho^2)$, creating a delicate balance between the smallness of the operator and the largeness of $1/\rho$. Unlike in the $\sigma_2$ case, where abundant good terms compensated in the degenerate region, the setting of \eqref{s} requires a finer analysis of the degeneracy.  The cutoff in the test function requires adjusting to a higher rate of decay to the boundary.

An important ingredient in the proof is an Alexandrov Theorem that holds for convex functions (see \cite{EG}), and was extended by Chaudhuri-Trudinger \cite{CT} to functions which are $k$-convex where $k\geq n/2$, and $n$ is the dimension. We recall that a function $u$ is $k$-convex if $\sigma_j\geq 0$ for all $1\leq j \leq k$, where $\sigma_j$ is the $j$-th symmetric polynomial of the eigenvalues of $D^2u$. However, in two dimensions, the potential $u$ is only $1$-convex (subharmonic) for positive phase, which does not satisfy the above conditions. In order to prove an Alexandrov Theorem for viscosity solutions of \eqref{s}, we develop a new scale-invariant, interior gradient estimate and an integral estimate for the volume form $V = \sqrt{\det g}$. We note that this bound of the volume form does not require the assumption that the gradient of the phase vanishes on its zero set.
    
    Lastly, to extend the regularity provided by the Alexandrov Theorem, we make use of a recent result by Fan \cite{ZF} which extends Savin's \cite{Savin} perturbation theory to second order, fully nonlinear equations with variable right-hand side. As in \cite{RY3}, this shows that the regularity at a point $y_0$ can be extended to regularity over a small ball $B_{r}(y_0)$. The doubling inequality then extends this regularity to a larger ball containing the singular point, contradicting the assumed irregularity.

\subsection*{Organization}The paper is organized as follows. In Section \ref{sec_prel}, we introduce notation and recall several useful equations. In Section \ref{sec_jac}, we establish the degenerate Jacobi inequality, which is then applied in Section \ref{sec_doubling} to prove a doubling property for the Hessian. In Section \ref{sec_alexandrv}, we develop an Alexandrov-type Theorem by deriving both a scale-invariant gradient estimate and an integral estimate for the volume. In Section \ref{sec_main}, we present the proof of Theorem \ref{thm1}. The paper concludes with an Appendix containing additional computations. 

\subsection*{Acknowledgments} 
AB acknowledges
the support of NSF grant DMS-2350290, the Simons Foundation grant MPS-TSM-00002933, and a Bill Guthridge fellowship from UNC-Chapel Hill. JW acknowledges
the support of NSF RTG grant DMS-2135998 and NSF grant DMS-2350290.

\section{Preliminaries}\label{sec_prel}

For the convenience of the readers, we recall some preliminary results. We first introduce some notation that will be used in this paper. The induced Riemannian metric on the Lagrangian submanifold $L=(x,Du(x))\subset \mathbb{R}^2\times\mathbb{R}^2$ is given by
\[
g=I_2+(D^2u)^2 .
\]
We denote
 \begin{align*} 
    \partial_i=\frac{\partial}{\partial x_i} \text{ , }
     \partial_{ij}=\frac{\partial^2}{\partial x_i \partial x_j} \text{ , }
     u_i=\partial_iu \text{ , }
    u_{ij}=\partial_{ij}u.
    \end{align*}
  Note that for the functions defined below, the subscripts on the left do not represent partial derivatives\begin{align*}
    h_{ijk}=\sqrt{g^{ii}}\sqrt{g^{jj}}\sqrt{g^{kk}}u_{ijk},\quad
    g^{ii}=\frac{1}{1+\lambda_i^2}.
    \end{align*}
Here, $(g^{ij})$ is the inverse of the matrix $g$, and $h_{ijk}$ denotes the second fundamental form when the Hessian of $u$ is diagonalized.
The volume form, gradient, and inner product with respect to the metric $g$ are given by
\begin{align*}
    dv_g=\sqrt{\det g}dx &= Vdx \text{ , }\qquad
    \nabla_g v=g^{ij}v_iL_j\\
    \langle\nabla_gv,\nabla_g w\rangle_g &=g^{ij}v_iw_j \text{ , }\quad
    |\nabla_gv|^2=\langle\nabla_gv,\nabla_g v\rangle_g.
\end{align*}

The Laplace-Beltrami operator on the non-minimal submanifold $L=(x, Du(x))$ is given by:
\begin{equation*}
\Delta_g =g^{ij}\partial_{ij}-g^{jp}u_{pq}(\pd_q\Theta)  \partial_j.
\end{equation*}

On taking the gradient of both sides of the Lagrangian mean curvature equation \eqref{s}, we get another useful equation
\begin{equation}
\sum_{a,b=1}^{2}g^{ab}u_{jab}=\pd_j\Theta .\label{linearize}
\end{equation}
The details for the two above equations can be found in \cite{AB}.

\section{A degenerate Jacobi inequality}\label{sec_jac}

In this section, we prove a new Jacobi-type inequality for the slope of the gradient graph $(x,Du(x))$ in two dimensions, i.e., we show that a certain function of the slope of the gradient graph $(x,Du(x))$ is almost strongly subharmonic.

\begin{lemma}\label{PropLapl}
Let $u$ be a smooth solution of \eqref{s} in $\mathbb{R}^{2}$. Suppose that the Hessian $D^{2}u$ is diagonalized at the point $x_0$. Then at $x_0$, $b=\log\sqrt{\det g}$ satisfies
\begin{align}
    \Delta_g b &\overset{x_0}{=}\sum_{a,b,c=1}^2(1 + \lambda_b\lambda_c)h_{abc}^2\label{mcterms}\\
    &\quad +\sum_{i=1}^2 g^{ii}\lambda_i\pd_{ii}\Theta - \sum_{i=1}^2 g^{ii}\lambda_i(\pd_i\Theta) \pd_i b.\label{psiterms}
\end{align}
\end{lemma}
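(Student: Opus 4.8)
The plan is to compute $\Delta_g b$ directly at the point $x_0$ where $D^2u$ is diagonal, using the formula $\Delta_g = g^{ij}\partial_{ij} - g^{jp}u_{pq}(\partial_q\Theta)\partial_j$ and the identity $b = \log\sqrt{\det g} = \tfrac12\log\det(I_2 + (D^2u)^2)$. First I would record the first and second derivatives of $b$. Differentiating, $\partial_k b = \tfrac12 g^{pq}\partial_k g_{pq}$, and since $g_{pq} = \delta_{pq} + u_{pa}u_{aq}$, at $x_0$ (diagonal Hessian) one gets $\partial_k g_{pq} = u_{pak}u_{aq} + u_{pa}u_{aqk}$, which at $x_0$ simplifies because $u_{pa} = \lambda_p\delta_{pa}$. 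This yields $\partial_k b \overset{x_0}{=} \sum_i g^{ii}\lambda_i u_{iik}$. For the second derivative, I differentiate once more and again evaluate at $x_0$; the terms split into those coming from $\partial_k g^{ii}$ (which produce $-2 g^{ii}g^{ii}\lambda_i u_{iik}\cdot(\text{stuff})$-type factors) and those from second derivatives of $g_{pq}$, which involve both $u_{iikk}$ and quadratic expressions $u_{iak}u_{aik}$ in the third derivatives. The key is to organize all quadratic-in-third-derivative terms into the $h_{abc}$ normalization and all $u_{iikk}$ terms for later elimination.

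The second main step is to eliminate the fourth-derivative terms $\sum_i g^{ii} u_{iikk}$ using the differentiated equation. Differentiating \eqref{linearize} once more in $x_k$, or differentiating the diagonalized form of \eqref{s} twice, produces an expression for $\sum_{a} g^{aa} u_{iiaa}$-type sums in terms of $\partial_{ii}\Theta$, lower-order third-derivative products (coming from $\partial_k g^{ab}$), and the drift. This is where the Hessian equation enters and where the curvature-type terms $(1+\lambda_b\lambda_c)h_{abc}^2$ will be produced after combining with the quadratic terms from Step 1 — the $+1$ piece is the ambient contribution and the $\lambda_b\lambda_c$ piece comes from the $(D^2u)^2$ structure of $g$. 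I expect the algebra here to be the real work: tracking which index pairs appear, using the symmetry $u_{ijk} = u_{jik}$ (so that $h_{abc}$ is fully symmetric), and confirming the coefficient in front of each $h_{abc}^2$ is exactly $1 + \lambda_b\lambda_c$ and not something off by a factor. In dimension two this is a finite, small sum (indices run over $\{1,2\}$), so it is checkable by hand, but it is easy to drop a term.

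The third step is bookkeeping of the first-order/drift contributions. The drift part of $\Delta_g$, namely $-g^{jp}u_{pq}(\partial_q\Theta)\partial_j b$, evaluates at $x_0$ to $-\sum_i g^{ii}\lambda_i(\partial_i\Theta)\partial_i b$, which is exactly the last term in \eqref{psiterms}. The remaining $g^{ii}\lambda_i\partial_{ii}\Theta$ term in \eqref{psiterms} should emerge from the second-derivative-of-the-equation computation in Step 2, since differentiating \eqref{s} twice brings down $\partial_{ii}\Theta$ multiplied by the appropriate $g^{ii}\lambda_i$ weight (from the chain rule on $\arctan\lambda_i$). I would finish by collecting: the pure third-derivative quadratic terms assemble into \eqref{mcterms}, and the $\Theta$-derivative terms assemble into \eqref{psiterms}.

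The main obstacle I anticipate is \emph{not} conceptual but combinatorial: correctly handling the derivatives of $g^{ij}$ and $g_{ij}$ at a point where the Hessian is diagonal but its derivatives are not, and making sure the off-diagonal third derivatives $u_{12k}$ are carried through so that the final sum over $h_{abc}^2$ genuinely ranges over all index triples (including mixed ones like $h_{112}$, $h_{122}$) with the stated coefficient. A secondary subtlety is that $b = \log V$ rather than $V$ itself, so one must be careful that the $\log$ produces the extra $-(\partial_k b)^2$-type terms that (after using $\partial_k b = \sum_i g^{ii}\lambda_i u_{iik}$) either cancel against pieces of the second-derivative expansion or get absorbed; verifying this cancellation is the step most likely to reveal an arithmetic slip. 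Comparing against the known supercritical Jacobi identity in \cite[(2.2)]{AB2d} as a consistency check (it should reduce to this when the phase is away from critical) will be the way I'd guard against errors.
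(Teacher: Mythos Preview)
Your proposal is correct and is exactly the standard direct computation; the paper itself does not spell out a proof but simply cites \cite[Lemma 2.1]{YBern} and \cite[Proposition 3.1]{BW}, which carry out precisely the calculation you outline (differentiate $b=\tfrac12\log\det g$, eliminate fourth derivatives via the twice-differentiated equation \eqref{linearize}, and separate the drift term). Your identification of the combinatorial bookkeeping as the only real obstacle is accurate, and your plan to cross-check against the known supercritical Jacobi identity is a sensible safeguard.
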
 

The proof follows from the computations in \cite[Lemma 2.1]{YBern} and \cite[Proposition 3.1]{BW}.

Next, we have the following degenerate Jacobi inequality which was inspired by \cite{MY2009}.
\begin{proposition}
    Let $u$ be a smooth solution of \eqref{s} in $B_1(0)\subset \re^2$ where $-\pi < \Theta < \pi$ is in $C^2(B_1(0))$ such that $D\Theta = 0$ on the level set $\{\Theta = 0\}$. Suppose that the Hessian $D^2u$ is diagonalized at the point $x_0$. Then at $x_0$, $b= \log\sqrt{\det g}$ satisfies
    \begin{equation}\label{degJI}
        \Delta_g b \geq \frac{\sin|\Theta(x_0)|}{4}|\nabla_g b|^2 -C(\|D^2\Theta\|_{L^\infty(B_1(0))})
    \end{equation}
    whenever $0\leq |\Theta(x_0)| < \frac{\pi}{2}$. When $|\Theta(x_0)|\geq \frac{\pi}{2}$, we get
    \begin{equation}\label{JI2}
        \Delta_g b \geq \frac{3}{8}|\nabla_g b|^2 -C(\|D\Theta\|_{L^\infty(B_1(0))},\|D^2\Theta\|_{L^\infty(B_1(0))}).
    \end{equation}
\end{proposition}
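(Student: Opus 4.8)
The plan is to start from the identity in Lemma \ref{PropLapl}, rewrite the second-fundamental-form terms \eqref{mcterms} as a full square plus a controlled remainder, and then absorb the phase terms \eqref{psiterms} using the hypothesis $D\Theta=0$ on $\{\Theta=0\}$. First I would work at the diagonalizing point $x_0$ and recall that $b=\frac12\log\det g$ satisfies $\partial_i b \overset{x_0}{=} \sum_a \lambda_a\sqrt{g^{aa}}\,h_{aai}$ (this is the standard computation of $\nabla_g b$ for the Lagrangian graph, cf.\ \cite{YBern,BW}), so that $|\nabla_g b|^2 = \sum_i g^{ii}\bigl(\sum_a \lambda_a \sqrt{g^{aa}}\,h_{aai}\bigr)^2$. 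Using the linearized equation \eqref{linearize} in the form $\sum_a g^{aa}\lambda_a h_{aai}\sqrt{g^{ii}} \;=\;$ (a multiple of $\partial_i\Theta$) relates the "diagonal" part of $\nabla_g b$ to $D\Theta$; this is exactly where the $D\Theta$-dependence of the constant in \eqref{JI2} enters, and where the hypothesis $D\Theta=0$ on $\{\Theta=0\}$ will let us do better in \eqref{degJI}.

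Next I would lower-bound the quadratic form $Q := \sum_{a,b,c}(1+\lambda_b\lambda_c)h_{abc}^2$ from \eqref{mcterms}. Following the two-dimensional computation of Warren--Yuan \cite{MY2009}, one splits the sum according to how many indices coincide. The key algebraic fact in $n=2$ is that, after using $\tan\Theta = \sigma_1/(1-\sigma_2)$ to express $1+\lambda_1\lambda_2$ and $1+\lambda_i^2$ in terms of the phase, the coefficients $1+\lambda_b\lambda_c$ are all controlled below by a positive multiple of $\cos\Theta$ or by an absolute constant depending on which regime $\Theta$ lies in; one then completes the square so that $Q \geq c(\Theta)\,|\nabla_g b|^2 - (\text{cross terms with }D\Theta)$, where $c(\Theta) = \tfrac{\sin|\Theta|}{4}$ in the regime $|\Theta|<\pi/2$ (the $\sin|\Theta|$ being precisely the factor that degenerates as $\Theta\to 0$, reflecting harmonicity) and $c(\Theta)\geq \tfrac38$ once $|\Theta|\geq\pi/2$, where $1+\lambda_1\lambda_2$ and the relevant geometry are bounded away from the degenerate configuration.

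Then I would handle the phase terms $\sum_i g^{ii}\lambda_i\partial_{ii}\Theta - \sum_i g^{ii}\lambda_i(\partial_i\Theta)\partial_i b$ in \eqref{psiterms}. The first sum is bounded by $\|D^2\Theta\|_{L^\infty}$ since $g^{ii}|\lambda_i| = |\lambda_i|/(1+\lambda_i^2)\leq \tfrac12$. For the second sum, in the regime $|\Theta(x_0)|\geq\pi/2$ one simply uses $g^{ii}|\lambda_i|\leq\tfrac12$ together with a Cauchy--Schwarz/Young split $|(\partial_i\Theta)\partial_i b|\leq \e\, g^{ii}(\partial_i b)^2\cdots$, wait—more precisely one bounds $|\lambda_i(\partial_i\Theta)\partial_i b|\,g^{ii} \leq \e |\nabla_g b|^2 + C_\e \|D\Theta\|_{L^\infty}^2$, and chooses $\e$ small so the $|\nabla_g b|^2$ loss is absorbed into the $\tfrac12$ gap between the square coefficient and $\tfrac38$; this produces the $\|D\Theta\|_{L^\infty}$-dependence in \eqref{JI2}. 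In the regime $0\leq|\Theta(x_0)|<\pi/2$, the same Young split would cost a constant depending on $\|D\Theta\|_{L^\infty}$, which is \emph{not} allowed in \eqref{degJI}; here is where the hypothesis $D\Theta=0$ on $\{\Theta=0\}$ is essential: by the pointwise interpolation inequality \cite[Lemma 7.7.2]{lhorm1} applied to $g=\max\{0,\pm\Theta\}\in C^{1,1}$, one has $|\partial_i\Theta(x_0)|^2 \leq C\,|\Theta(x_0)|\,\|D^2\Theta\|_{L^\infty}$, so $|\partial_i\Theta(x_0)| \leq C\sqrt{|\Theta(x_0)|}\,\|D^2\Theta\|_{L^\infty}^{1/2}$, and since $\sin|\Theta(x_0)| \sim |\Theta(x_0)|$ near $0$, the term $|\lambda_i(\partial_i\Theta)\partial_i b|g^{ii} \leq \tfrac{\sin|\Theta(x_0)|}{8}|\nabla_g b|^2 + C(\|D^2\Theta\|_{L^\infty})$: the smallness of $|\partial_i\Theta|$ is exactly calibrated to the degenerating coefficient $\sin|\Theta|$, so the loss is reabsorbed into the completed square (which I would arrange to have coefficient $\tfrac{\sin|\Theta|}{2}$, say, leaving $\tfrac{\sin|\Theta|}{4}$ after the absorption) with a constant depending only on $\|D^2\Theta\|_{L^\infty}$.

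The main obstacle is the sharp bookkeeping in the degenerate regime: one must show that \emph{every} term carrying a $D\Theta$ factor—both the cross terms generated in completing the square in $Q$ and the explicit term in \eqref{psiterms}—is of size $O(\sqrt{|\Theta|})$ times something controllable, so that after multiplying by the at-most-$O(1)$ Hessian factors and applying Young's inequality against $|\nabla_g b|^2$ with the small weight $\sim\sin|\Theta|$, the leftover constant depends on $\|D^2\Theta\|_{L^\infty}$ \emph{alone} and not on $\|D\Theta\|_{L^\infty}$. This requires (i) the precise $n=2$ identities expressing $1+\lambda_i\lambda_j$ through $\Theta$ so that the completed-square coefficient is genuinely $\sim\sin|\Theta|$ and not smaller, and (ii) a careful application of the Hörmander interpolation inequality to the non-smooth function $\max\{0,\pm\Theta\}$, which is only $C^{1,1}$; I would verify that the interpolation lemma's hypotheses (nonnegativity, bounded second derivatives a.e.) are met and that it is applied at a point $x_0$ where the relevant one-sided function is differentiable, which holds since $\Theta\in C^2$. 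The $|\Theta|\geq\pi/2$ case is comparatively routine once the $n=2$ structure of $Q$ is in hand, since nothing degenerates there.
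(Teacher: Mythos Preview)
Your overall strategy matches the paper's: start from Lemma~\ref{PropLapl}, extract a $c(\Theta)|\nabla_g b|^2$ lower bound from the cubic form with $c(\Theta)=\tfrac{\sin|\Theta|}{4}$ in the subcritical regime, and use H\"ormander's interpolation $|D\Theta|^2\le C|\Theta|\,\|D^2\Theta\|_{L^\infty}$ to make every $D\Theta$-contribution cost only a $\|D^2\Theta\|_{L^\infty}$-dependent constant. The case split, the Young absorptions, and the hypercritical argument are all as in the paper.

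There is one genuine misstep in your sketch that you should correct before executing: the assertion that ``the coefficients $1+\lambda_b\lambda_c$ are all controlled below by a positive multiple of $\cos\Theta$'' is false in the regime $0<\Theta<\pi/2$. There $\sigma_2=\lambda_1\lambda_2\le 0$ and $1+\lambda_1\lambda_2$ can be arbitrarily negative (take $\lambda_1$ large, $\lambda_2\approx -\lambda_1$), so you cannot get the Jacobi inequality by bounding each $1+\lambda_b\lambda_c$ term individually. What actually happens---and what the paper (following \cite{MY2009}) does---is that one first uses the linearized equation \eqref{linearize} in the form $h_{iii}=\sqrt{g^{ii}}\Theta_{x_i}-h_{ijj}$ to eliminate $h_{111},h_{222}$; the full cubic form then reorganizes as $(h_{112}^2+h_{122}^2)\bigl(4+\sigma_1^2\bigr)$ plus $D\Theta$-cross terms, while $|\nabla_g b|^2$ contributes $(h_{112}^2+h_{122}^2)(\lambda_1-\lambda_2)^2$ plus $D\Theta$-cross terms. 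After Young on the cross terms (with parameter $\eta$), the coefficient of $(h_{112}^2+h_{122}^2)$ in $\Delta_g b-\epsilon|\nabla_g b|^2$ becomes a quadratic in $\sigma_1$ whose discriminant condition, together with the PDE rewritten as $\cos\Theta\,\sigma_1+\sin\Theta(\sigma_2-1)=0$, forces $\epsilon=\tfrac{\sin\Theta}{4}$, $\eta=\sin\Theta$. That is where the $\sin|\Theta|$ genuinely appears: not from any single coefficient $1+\lambda_b\lambda_c$, but from the discriminant of the aggregated quadratic. The residual after this choice is $-|D\Theta|^2(\tfrac32+\csc\Theta)+O(\|D^2\Theta\|_{L^\infty})$, and H\"ormander's inequality kills the $\csc\Theta\,|D\Theta|^2$ term exactly as you describe. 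Once you replace the incorrect coefficient claim by this linearized-equation reduction, your outline becomes the paper's proof.
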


\begin{proof}
    We prove this in four cases, based on the value of $\Theta(x_0)$.
    
    \textbf{Case 1.} Suppose that $0< \Theta(x_0)\leq \frac{\pi}{2}$. We start by considering the mean curvature terms \eqref{mcterms} from Proposition \ref{PropLapl}. We will use \eqref{linearize} in the form of
    \begin{equation}\label{Dpsi}
        h_{11i} + h_{22i} = \sqrt{g^{ii}}\Theta_{x_i}
    \end{equation}
    for $i=1,2$ in order to write everything in terms of $h_{112}$ and $h_{122}$. We see that
    \begin{align*}
        \sum_{a,b,c=1}^2(1 + \lambda_b\lambda_c)h_{abc}^2 &= h^2_{111}(1 + \lambda_1^2) + h_{112}^2(3 + (\lambda_1+\lambda_2)^2 - \lambda_2^2)\\
        &\quad + h_{222}^2(1 + \lambda_2^2) + h_{122}^2(3 + (\lambda_1+\lambda_2)^2 - \lambda_1^2)\\
        &=(\sqrt{g^{11}}\Theta_{x_1} - h_{122})^2(1 + \lambda_1^2) + h_{112}^2(3 + (\lambda_1+\lambda_2)^2 - \lambda_2^2)\\
        &\quad + (\sqrt{g^{22}}\Theta_{x_2} - h_{112})^2(1 + \lambda_2^2) + h_{122}^2(3 + (\lambda_1+\lambda_2)^2 - \lambda_1^2)\\
        &= (h_{112}^2 + h_{122}^2)(4 + (\lambda_1 + \lambda_2)^2)+ (g^{11}\Theta_{x_1}^2 - 2\sqrt{g^{11}}\Theta_{x_1}h_{122})(1 + \lambda_1^2)\\ 
        &\quad + (g^{22}\Theta_{x_2}^2 - 2\sqrt{g^{22}}\Theta_{x_2}h_{112})(1 + \lambda_2^2).
    \end{align*}
    Now, the right-most term of \eqref{psiterms} becomes
    \begin{align}
        \sum_{i=1}^2 g^{ii}\lambda_i(\pd_i\Theta) \pd_i b &= \sum_{i,j=1}^2g^{ii}g^{jj}\Theta_{x_i}\lambda_i\lambda_ju_{jji}=\sum_{i,j}^2\sqrt{g^{ii}}\Theta_{x_i}\lambda_i\lambda_jh_{jji}\nonumber\\
        &=g^{11}\Theta_{x_1}^2\lambda_1^2 -\sqrt{g^{11}}\Theta_{x_1}h_{122}\lambda_1(\lambda_1-\lambda_2)\nonumber\\
        &\quad + g^{22}\Theta_{x_2}^2\lambda_2^2 -\sqrt{g^{22}}\Theta_{x_2}h_{112}\lambda_2(\lambda_2-\lambda_1)\nonumber
    \end{align}
    where the last lines follow from \eqref{Dpsi}. Combining these together, we get
    \begin{align*}
        \Delta_g b &= (h_{112}^2 + h_{122}^2)(4 + (\lambda_1 + \lambda_2)^2)+ (g^{11}\Theta_{x_1}^2 - 2\sqrt{g^{11}}\Theta_{x_1}h_{122})(1 + \lambda_1^2)\\ 
        &\quad + (g^{22}\Theta_{x_2}^2 - 2\sqrt{g^{22}}\Theta_{x_2}h_{112})(1 + \lambda_2^2)\\
        &\quad -g^{11}\Theta_{x_1}^2\lambda_1^2 +\sqrt{g^{11}}\Theta_{x_1}h_{122}\lambda_1(\lambda_1-\lambda_2)\\
        &\quad - g^{22}\Theta_{x_2}^2\lambda_2^2 +\sqrt{g^{22}}\Theta_{x_2}h_{112}\lambda_2(\lambda_2-\lambda_1)\\
        &\quad + g^{11}\lambda_1\Theta_{x_1x_1} + g^{22}\lambda_2\Theta_{x_2x_2} \\
        &= (h_{112}^2 + h_{122}^2)(4 + (\lambda_1 + \lambda_2)^2) \\
        &\quad+\sqrt{g^{11}}\Theta_{x_1}h_{122}(-2(1 + \lambda_1^2)+ \lambda_1(\lambda_1-\lambda_2))\\
        &\quad+\sqrt{g^{22}}\Theta_{x_2}h_{112}(-2(1+\lambda_2^2)+\lambda_2(\lambda_2 - \lambda_1))\\
        &\quad+g^{11}\Theta_{x_1}^2 + g^{22}\Theta_{x_2}^2\nonumber\\
        &\quad+g^{11}\lambda_1\Theta_{x_1x_1} + g^{22}\lambda_2\Theta_{x_2x_2}.
    \end{align*}

    The gradient of $b$ is then given by 
    \begin{align*}
        |\nabla_g b|^2 &= \sum_{i=1}^2g^{ii}(\pd_i b)^2 = \sum_{i=1}^2g^{ii}\left(\frac{1}{2}\sum_{a,b=1}^2g^{ab}\pd_ig_{ab}\right)^2\\
        &= \sum_{i=1}^2g^{ii}\left(\sum_{j=1}^2g^{jj}\lambda_ju_{jji}\right)^2\\
        &=g^{11}(g^{11}\lambda_1u_{111} + g^{22} \lambda_2 u_{122})^2 + g^{22}(g^{11} \lambda_1 u_{112} + g^{22}\lambda_2 u_{222})^2\\
        &= g^{11}(g^{22}u_{122}(\lambda_2-\lambda_1)+ \lambda_1\Theta_{x_1})^2 + g^{22}(g^{11}u_{112}(\lambda_1-\lambda_2) + \lambda_2\Theta_{x_2})^2\\
        &=(h_{112}^2+h_{122}^2)(\lambda_1-\lambda_2)^2 + g^{11}\lambda_1^2\Theta_{x_1}^2 + g^{22}\lambda_2^2\Theta_{x_2}^2\\
        &\quad + 2h_{122}\lambda_1(\lambda_2-\lambda_1)\sqrt{g^{11}}\Theta_{x_1} + 2h_{112}\lambda_2(\lambda_1-\lambda_2)\sqrt{g^{22}}\Theta_{x_2}.
    \end{align*}

    Altogether, we get
    \begin{align}
        \Delta_g b - \epsilon|\nabla_g b|^2 &= (h_{112}^2 + h_{122}^2)(4 + (\lambda_1 + \lambda_2)^2 - \epsilon(\lambda_1-\lambda_2)^2)\nonumber\\
        &\quad+\sqrt{g^{11}}\Theta_{x_1}h_{122}(-2(1 + \lambda_1^2)+ \lambda_1(\lambda_1-\lambda_2)-2\epsilon\lambda_1(\lambda_2-\lambda_1))\nonumber\\
        &\quad+\sqrt{g^{22}}\Theta_{x_2}h_{112}(-2(1+\lambda_2^2)+\lambda_2(\lambda_2 - \lambda_1)-2\epsilon\lambda_2(\lambda_1-\lambda_2))\nonumber\\
        &\quad+g^{11}\Theta_{x_1}^2(1-\epsilon\lambda_1^2) + g^{22}\Theta_{x_2}^2(1-\epsilon\lambda_2^2)\nonumber\\
        &\quad+g^{11}\lambda_1\Theta_{x_1x_1} + g^{22}\lambda_2\Theta_{x_2x_2}\nonumber\\
        &= (h_{112}^2 + h_{122}^2)(4 + (\lambda_1 + \lambda_2)^2 - \epsilon(\lambda_1-\lambda_2)^2)\label{hterms}\\
        &\quad+\sqrt{g^{11}}\Theta_{x_1}h_{122}(-(1-2\epsilon)\lambda_1^2-(1 + 2\epsilon)\lambda_1\lambda_2 - 2)\label{psix1}\\
        &\quad+\sqrt{g^{22}}\Theta_{x_2}h_{112}(-(1-2\epsilon)\lambda_2^2-(1 + 2\epsilon)\lambda_1\lambda_2 - 2)\label{psix2}\\
        &\quad+g^{11}\Theta_{x_1}^2(1-\epsilon\lambda_1^2) + g^{22}\Theta_{x_2}^2(1-\epsilon\lambda_2^2)\label{psixs}\\
        &\quad+g^{11}\lambda_1\Theta_{x_1x_1} + g^{22}\lambda_2\Theta_{x_2x_2}.\nonumber
    \end{align}

    From here we apply Young's inequality to \eqref{psix1} and \eqref{psix2} so that they combine into \eqref{hterms} and \eqref{psixs}. 

    Using the alternate form of equation \eqref{tanpde} given by
    \[
    0=\cos\Theta \Delta u +\sin\Theta(\det D^2u - 1) = \cos\Theta \sigma_1 + \sin\Theta(\sigma_2-1),
    \]
    we rewrite the coefficient of $(h_{112}^2 + h_{122}^2)$ as a quadratic polynomial in $\sigma_1$. We note that $\sigma_1=\lambda_1+\lambda_2\geq 0$ and $\sigma_2=\lambda_1\lambda_2\leq 0$ from the assumption that $0<\Theta(x_0)\leq\frac{\pi}{2}$. 

    Applying Young's inequality to \eqref{psix1}, we get
    \begin{align*}
        \sqrt{g^{11}}\Theta_{x_1}h_{122}&(-\lambda_1^2+2\epsilon\lambda_1^2-\lambda_1\lambda_2 - 2\epsilon\lambda_1\lambda_2 - 2) \geq\\
        &\quad-\frac{g^{11}\Theta_{x_1}^2\lambda_1^2}{2\eta}-\frac{h_{122}^2\lambda_1^2\eta}{2}
        -\epsilon g^{11}\Theta_{x_1}^2\lambda_1^2 - \epsilon h_{122}^2\lambda_1^2\\
        &\quad-\frac{g^{11}\Theta_{x_1}^2\lambda_1^2}{2\eta} - \frac{h_{122}^2\lambda_2^2\eta}{2} -\epsilon g^{11}\Theta_{x_1}^2\lambda_1^2 - \epsilon h_{122}^2\lambda_2^2 - g^{11}\Theta_{x_1}^2 - h_{122}^2.
    \end{align*}
    Applying Young's inequality again, \eqref{psix2} combines with \eqref{hterms} and \eqref{psixs} to get
    \begin{align*}
        \Delta_g b - \epsilon|\nabla_g b|^2 &\geq (h_{112}^2 + h_{122}^2)\bigg((1 - 2\epsilon - \eta/2)\sigma_1^2 -\cot\Theta(\eta + 6\epsilon)\sigma_1 + (3 + \eta + 6\epsilon)\bigg)\\
        &\quad -g^{11}\Theta_{x_1}^2(1 + \lambda_1^2(2\epsilon + 1/\eta)) - g^{22}\Theta_{x_1}^2(1 + \lambda_2^2(2\epsilon + 1/\eta))\\
        &\quad +g^{11}\lambda_1\Theta_{x_1x_1} + g^{22}\lambda_2\Theta_{x_2x_2}.
    \end{align*}

    For the coefficient on the mean curvature terms to be positive it suffices to pick $\epsilon,\eta>0$ such that
    \begin{align}
        (1-2\epsilon-\eta/2)&\geq 0\nonumber\\
        \cot^2\Theta(\eta + 6\epsilon)^2 &\leq 4(1-2\epsilon-\eta/2)(3 + \eta + 6\epsilon)\label{cotThet}
    \end{align}
    where the second line comes from the quadratic formula of $a\sigma_1^2 + b\sigma_1 + c$ and forcing $b^2 - 4ac \leq 0$ so that $0\leq a\sigma_1^2 + b\sigma_1 + c$ for any $\sigma_1\geq 0$. 

    We choose $\epsilon=(\sin\Theta)/4,\;\eta = \sin\Theta$ and  observe:
    \begin{align*}
        4(1-\sin\Theta)(3+\frac{5}{2}\sin\Theta)-(5/4)^2\cos^2\Theta &= 4(3 - \frac{1}{2}\sin \Theta - \frac{5}{2}\sin^2\Theta)- \frac{25}{4}\cos^2\Theta\\
        &= 12 - 2\sin\Theta - 10\sin^2\Theta - \frac{25}{4}\cos^2\Theta\\
        &= 12 - \frac{25}{4} - 2\sin\Theta - \frac{15}{4}\sin^2\Theta\\
        &= \frac{1}{4}(23 - 8\sin\Theta - 15\sin^2\Theta)\geq 0
    \end{align*}
    which is equivalent to \eqref{cotThet}.
    
    We get
    \begin{equation}\label{D2-sD1}
        \Delta_g b - \frac{\sin\Theta}{4}|\nabla_g b|^2\geq -|D\Theta|^2(3/2 + \csc\Theta) + g^{11}\lambda_1\Theta_{x_1x_1} + g^{22}\lambda_2\Theta_{x_2x_2}.
    \end{equation}

    As $D\Theta = 0$ on $\{\Theta =0\}$, we can apply the pointwise interpolation inequality \cite[Lemma 7.7.2]{lhorm1} to $\max\{0,\Theta\}$, which is still $C^{1,1}(B_1(0))$, so \eqref{D2-sD1} becomes
    \begin{align*}
        \Delta_g b - \frac{\sin\Theta}{4}|\nabla_g b|^2&\geq -|D\Theta|^2(3/2 + \csc\Theta) + g^{11}\lambda_1\Theta_{x_1x_1} + g^{22}\lambda_2\Theta_{x_2x_2}\\
        &\geq -(\Theta(x_0) + 2\|D^2\Theta\|_{L^\infty(B_1(0))})\left(\frac{3\Theta(x_0)}{2} + \frac{\Theta(x_0)}{\sin(\Theta(x_0))}\right) - 2\|D^2\Theta\|_{L^\infty(B_1(0))}\\
        &\geq -\left(\frac{\pi}{2} + 2\|D^2\Theta\|_{L^\infty(B_1(0))}\right)\left(\frac{3\pi}{4} + \frac{\pi}{2} \right) - 2\|D^2\Theta\|_{L^\infty(B_1(0))}\\
        &\geq -\frac{5\pi^2}{8} - \left(\frac{5\pi + 4}{2}\right)\|D^2\Theta\|_{L^\infty(B_1(0))}
    \end{align*}
    where we use $\Theta(x_0)\leq \frac{\pi}{2}$.

    \textbf{Case 2.} Suppose that $\frac{\pi}{2}< \Theta(x_0)<\pi$. As $\Theta$ is in the hypercritical range, we get that $\sigma_1,\sigma_2\geq 0$. Returning to the mean curvature terms \eqref{mcterms} from Proposition \ref{PropLapl}, we get
    \begin{align*}
        \sum_{a,b,c=1}^2(1 + \lambda_b\lambda_c)h_{abc}^2 &= h^2_{111}(1 + \lambda_1^2) + h_{112}^2(3 + \lambda_1^2 + 2\lambda_1\lambda_2)\\
        &\quad + h_{222}^2(1 + \lambda_2^2) + h_{122}^2(3 + \lambda_2^2 + 2\lambda_1\lambda_2).
    \end{align*}
    We also see that the gradient of $b$ is 
    \begin{align*}
        \frac{1}{2}|\nabla_g b|^2 &= \sum_{i=1}^2g^{ii}(\pd_i b)^2 = \sum_{i=1}^2g^{ii}\left(\frac{1}{2}\sum_{a,b=1}^2g^{ab}\pd_ig_{ab}\right)^2\\
        &= \sum_{i=1}^2g^{ii}\left(\sum_{j=1}^2g^{jj}\lambda_ju_{jji}\right)^2\\
        &=g^{11}(g^{11}\lambda_1u_{111} + g^{22} \lambda_2 u_{122})^2 + g^{22}(g^{11} \lambda_1 u_{112} + g^{22}\lambda_2 u_{222})^2\\
        &\leq h_{111}^2\lambda_1^2 + h_{112}^2\lambda_1^2 + h_{122}^2\lambda_2^2 + h_{222}^2\lambda_2^2.
    \end{align*}
    Thus,
    \begin{align*}
        \Delta_g b - \frac{1}{2}|\nabla_g b|^2 &\geq h_{111}^2 + h_{112}^2(3 + 2\lambda_1\lambda_2) + h_{122}^2(3 + 2\lambda_1\lambda_2) + h_{222}^2\\
        &\qquad +\sum_{i=1}^2 g^{ii}\lambda_i\pd_{ii}\Theta - \sum_{j=1}^2 g^{jj}\lambda_j(\pd_j\Theta)\pd_j b\\
        &\geq -\frac{\eta}{2}|\nabla_g b|^2 - \frac{1}{2\eta}\sum_{j=1}^{2}g^{jj}\lambda_j^2\Theta_{x_j}^2+\sum_{i=1}^2 g^{ii}\lambda_i\pd_{ii}\Theta\\
        &\geq -\frac{\eta}{2}|\nabla_g b|^2 - \frac{1}{2\eta}|D\Theta(x_0)|^2 - 2|D^2\Theta(x_0)|.
    \end{align*}
    Taking $\eta = 1/4$ we get
    \begin{align*}
        \Delta_g b &\geq \frac{3}{8}|\nabla_g b|^2  - 2|D\Theta(x_0)|^2- 2|D^2\Theta(x_0)|\\
        &\geq \frac{3}{8}|\nabla_g b|^2 - 2\|D\Theta\|^2_{L^\infty(B_1(0))} - 2\|D^2\Theta\|_{L^\infty(B_1(0))}.
    \end{align*}

    \textbf{Case 3.} Suppose that $-\pi<\Theta(x_0)< 0$. As the function $b$ is symmetric in the eigenvalues, we can reduce to the positive $\Theta$ cases above since $u$ also solves
    \[
    \arctan(-\lambda_1) + \arctan(-\lambda_2) = -\Theta(x)
    \]
    where now $-\Theta(x_0)> 0$. All of the estimates above hold with the following replacements:
    \begin{itemize}
        \item $\Theta(x_0)$ becomes $-\Theta(x_0)$
        \item $\lambda_1$ becomes $-\lambda_2$
        \item $\lambda_2$ becomes $-\lambda_1$
        \item $\max\{0,\Theta\}$ becomes $\max\{0, -\Theta\}$.
    \end{itemize}
    
    \textbf{Case 4.} Suppose that $\Theta(x_0) = 0$. From the assumption on $\Theta$, we get $D\Theta(x_0) = 0$, and \eqref{Dpsi} becomes
    \[
    h_{11i}(x_0) + h_{22i}(x_0) = \pd_i\Theta(x_0) = 0.
    \]
    Thus, the Jacobi inequality follows from \cite{MY2009} as
    \begin{align*}
        \Delta_g b &\geq \sin(\Theta(x_0))|\nabla_g b|^2 +g^{11}\lambda_1\Theta_{x_1x_1} + g^{22}\lambda_2\Theta_{x_2x_2}\\
        &\geq -2\|D^2\Theta\|_{L^\infty(B_1(0))}
    \end{align*}
    which satisfies \eqref{degJI} with $\Theta(x_0) = 0$.
\end{proof}

\section{Doubling property of the Hessian}\label{sec_doubling}
Using the above degenerate Jacobi inequality, we prove the following doubling inequality.

\begin{lemma}\label{doubling}
    Let $u$ be a smooth solution of \eqref{s} on $B_2(0)\subset\re^2$ with $-\pi< \Theta < \pi$ and $D\Theta = 0$ on the level set $\{\Theta = 0\}$. For any $p\in B_{r}(0)$, with $r \leq 1$,
    \[
    \sup_{B_{r}(p)}b(D^2u)\leq C(r,n,\|u\|_{C^{1}(B_2(0))}, \|D\Theta\|_{L^\infty(B_2(0))},\|D^2\Theta\|_{L^\infty(B_2(0))})\sup_{B_\frac{r}{2}(p)}b(D^2u),
    \]
    and hence, by the properness of $b$, 
    \[
    \sup_{B_{r}(p)}|D^2u|\leq C(r,n,\|u\|_{C^{1}(B_2(0))},\|D\Theta\|_{L^\infty(B_2(0))}, \|D^2\Theta\|_{L^\infty(B_2(0))},\sup_{B_\frac{r}{2}(p)}|D^2u|).
    \]
\end{lemma}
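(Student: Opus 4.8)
I would prove the doubling inequality for $b=\log\sqrt{\det g}$ by a maximum-principle argument on a test function of the form $\psi=\rho^{2\beta}F(b)$, where $\rho$ is a cutoff supported in $B_r(p)$, $F$ is convex and increasing (an exponential $F(b)=e^{\al b}$, or a negative power $F(b)=(M-b)^{-2\beta}$ with $M=2\sup_{B_r(p)}b$), and $\beta$ together with the decay rate of $\rho$ at $\partial B_r(p)$ are tuned so that the degenerate Jacobi inequalities \eqref{degJI}--\eqref{JI2} can absorb the error terms. Since $p\in B_r(0)$ and $r\le1$, we have $\overline{B_r(p)}\subset B_2(0)$, so $u$ is smooth on the closure; we may assume $\sup_{B_{r/2}(p)}b>0$. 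I would fix a smooth $\rho$ with $\rho\equiv1$ on $B_{r/2}(p)$, $\rho\equiv0$ on $\partial B_r(p)$, $0\le\rho\le1$, $|D\rho|\le C/r$, $|D^2\rho|\le C/r^2$, chosen---as flagged in the introduction---to approach $0$ at $\partial B_r(p)$ like a high power of the distance, a higher rate than suffices in the nondegenerate case. Then $\psi$ vanishes on $\partial B_r(p)$ and is positive inside, hence attains its maximum at an interior point $x_0$, where I rotate so that $D^2u(x_0)$ is diagonal.

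At $x_0$ the vanishing of $D\psi$ gives $F'(b)\,Db=-\tfrac{2\beta}{\rho}F(b)\,D\rho$, which pins the weighted slope $|\nabla_g b|^2=g^{ij}\,\partial_i b\,\partial_j b$ to $\rho^{-2}|\nabla_g\rho|^2$ up to a computable factor, while $\Delta_g\psi(x_0)\le0$. Expanding $\Delta_g\psi$ with Lemma \ref{PropLapl}, inserting the Jacobi inequality $\Delta_g b\ge c_\Theta|\nabla_g b|^2-C_J$ (where $c_\Theta=\tfrac14\sin|\Theta(x_0)|$ when $|\Theta(x_0)|<\tfrac{\pi}{2}$ by \eqref{degJI}, $c_\Theta=\tfrac38$ when $|\Theta(x_0)|\ge\tfrac{\pi}{2}$ by \eqref{JI2}, and $C_J$ depends only on $\|D\Theta\|_{L^\infty(B_2(0))}$ and $\|D^2\Theta\|_{L^\infty(B_2(0))}$), and eliminating $|\nabla_g b|^2$ via the first-order identity, collapses everything to a scalar inequality of the schematic form
\[
c_\Theta\,|\nabla_g\rho|^2\;\le\;\frac{C(\|D\Theta\|_{L^\infty(B_2(0))})}{r^2}\,\rho\;+\;(\text{lower-order }\Theta\text{-error}).
\]
In parallel, comparing $\psi(x_0)$ with the values of $\psi$ on $B_{r/2}(p)$, where $\rho\equiv1$, forces $\rho(x_0)$ to be bounded below and gives $b(x_0)\ge\sup_{B_{r/2}(p)}b$. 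When $|\Theta(x_0)|\ge\tfrac{\pi}{2}$, $c_\Theta=\tfrac38$ is an absolute constant and the argument closes just as in the supercritical/higher-dimensional case \cite{RSsLag,RY3}, producing either $\sup_{B_r(p)}b\le C\sup_{B_{r/2}(p)}b$ directly or else a pure bound $\sup_{B_r(p)}b\le C$.

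The step I expect to be the main obstacle is the regime $|\Theta(x_0)|<\tfrac{\pi}{2}$, where $c_\Theta=\tfrac14\sin|\Theta(x_0)|$ is not bounded below, so the parameters cannot be frozen once and for all; here I would balance three effects. First, $\rho$ decays to $\partial B_r(p)$ at a higher rate than in the nondegenerate setting, so the bad boundary terms---of size $O\!\big(\tfrac{1}{1+\lambda_2^2}\cdot\tfrac1{\rho^2}\big)$---shrink faster than the only $O(1)$ good terms. Second, one retains rather than discards the directional weighting in $|\nabla_g\rho|^2=\sum_i g^{ii}(\partial_i\rho)^2$, which is small precisely where $|D^2u(x_0)|$ is large, i.e.\ in the degenerate region itself. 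Third, the hypothesis $D\Theta=0$ on $\{\Theta=0\}$, via the pointwise interpolation inequality \cite[Lemma 7.7.2]{lhorm1} already used in Section \ref{sec_jac} (which gives $|D\Theta|^2\le C\,|\Theta|\,\|D^2\Theta\|_{L^\infty}$), keeps every $\Theta$-error term of genuinely lower order instead of blowing up like $\csc|\Theta|$ near the zero set; in particular $\Theta(x_0)=0$ is harmless, since then $D\Theta(x_0)=0$ and the mean-curvature terms \eqref{mcterms} alone suffice, exactly as in Case 4 of the preceding proof. Carrying this balance through yields $\sup_{B_r(p)}b\le C(r,n,\|u\|_{C^1(B_2(0))},\|D\Theta\|_{L^\infty(B_2(0))},\|D^2\Theta\|_{L^\infty(B_2(0))})\sup_{B_{r/2}(p)}b$, and the Hessian bound follows at once because $b=\tfrac12\log\det(I_2+(D^2u)^2)$ is a proper strictly increasing function of $|D^2u|$, with the extra dependence on $\sup_{B_{r/2}(p)}|D^2u|$ forced by the exponential relation between $b$ and $|D^2u|$.
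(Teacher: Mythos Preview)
Your proposal has a genuine gap in the degenerate regime $|\Theta(x_0)|<\tfrac{\pi}{2}$, and it stems from the choice of test function. A pure Korevaar-type quantity $\psi=\rho^{2\beta}F(b)$ cannot close here, for the reason already hinted at in the introduction: when $c_\Theta=\tfrac14\sin|\Theta(x_0)|$ is small, every ``good'' term your scheme produces---namely $(c_\Theta F'+F'')|\nabla_g b|^2$---carries the weight $|\nabla_g\rho|^2=\sum_i g^{ii}(\partial_i\rho)^2$ after you substitute the first-order condition. But the bad cutoff terms $g^{ij}\partial_{ij}(\rho^{2\beta})$ carry exactly the same $g^{ii}$ weights. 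When $\Theta(x_0)$ is near $0$ both eigenvalues are large with opposite signs, so all $g^{ii}$ are small; the two sides of your scalar inequality shrink together and nothing survives. A quick check with either $F(b)=e^{\alpha b}$ or $F(b)=(M-b)^{-2\beta}$ confirms that the resulting inequality at the maximum is vacuous once $c_\Theta=0$. Your ``three effects'' are all correct observations, but none of them manufactures a good term that is \emph{not} weighted by $g^{ii}$.

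The paper fixes this by augmenting the test function with $\alpha(x\cdot Du-u)+\tfrac{\beta}{2}|Du|^2$. The second piece contributes $\beta\,g^{ii}\lambda_i^2$ to $g^{ij}P_{ij}$, and since $g^{ii}\lambda_i^2\to1$ as $|\lambda_i|\to\infty$ this is a genuine $O(1)$ good term, uniform in the degenerate regime; this is what drives the bound $\rho^2\lesssim\bar b^{-(1-q)}$ in the subcase where $\sin|\Theta(x_0)|\lesssim\bar b^{-(1-q)}$. The first piece injects $-\alpha y_i\lambda_i$ into the first-order identity for $b_i/\bar b$, so that $g^{ii}(b_i/\bar b)^2\gtrsim\alpha^2$ when $|\lambda_i|$ is large; combined with the now-usable Jacobi coefficient (in the complementary subcase $\sin|\Theta(x_0)|\gtrsim\bar b^{-(1-q)}$) this yields $\rho^2\bar b^{2/3}\le C$. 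The case split at the threshold $\epsilon\bar b\sim\bar b^{2/3}$ and the exponent $\nu=6$ on the cutoff are calibrated precisely to these two mechanisms. Without the extra $Du$-dependent terms in the test function, neither mechanism is available, and the degenerate subcase cannot be closed.
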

\begin{proof}
  We take $r=1$ and $p=0$, since the rest of the proof follows for general $r$ and $p$. Define
    \begin{equation}\label{testP}
        P := \nu\ln\rho(x) + \alpha ( x\cdot Du - u) + \beta|Du|^2/2 +\ln\max(\bar{b},\gamma^{-1})
    \end{equation}
    where $\rho(x) = 1 - |x|^2$, $ \bar{b} = b - \max_{B_{1/2}(0)}b$, and constants $0 < |\alpha| < 1$, $0 < \beta < 1$, $\nu> 1$, $0< \gamma < 1$ to be chosen later. We will let $\Gamma = 1 + \|u\|_{C^{1}(B_2(0))}$. We will assume that $|x|\geq 1/2$ (since $\bar{b}$ takes care of $|x|\leq 1/2$) and $\bar{b}>\gamma^{-1}$ (since otherwise we are done).

    Assume $P$ achieves its maximum at some point $y \in B_1(0)$. We will assume that $D^2u$ is diagonalized at $y$. The first and second derivatives of $P$ are given by
    \begin{align*}
        P_i &= \frac{-2\nu x_i}{\rho }+ \alpha x_k u_{ki} + \beta u_ku_{ki} + \frac{b_i}{\bar{b}} \\
        P_{ij}&= -2\nu\frac{\delta_{ij}}{\rho} - 4\nu\frac{x_ix_j}{\rho^2} + \alpha u_{ij} + \alpha x_k u_{ijk} + \beta u_{kj}u_{ki} + \beta u_{k}u_{ijk} + \frac{b_{ij}}{\bar{b}} - \frac{b_i b_j}{\bar{b}^2}.
    \end{align*}
    At the maximum point $y$, the above reduces to
    \begin{align*}
        0 &= P_i = \frac{-2\nu y_i}{\rho} + \alpha y_i\lambda_i + \beta u_i\lambda_i + \frac{b_i}{\bar{b}}\\
        0 &\geq g^{ij}P_{ij} = \frac{-2\nu g^{ii}}{\rho} - 4\nu \frac{g^{ii}y_i^2}{\rho^2} + \alpha g^{ii}\lambda_i + \alpha y\cdot D\Theta \nonumber\\
        &\qquad + \beta g^{ii}\lambda_i^2 + \beta Du\cdot D\Theta +  g^{ij}\frac{b_{ij}}{\bar{b}} -  g^{ij}\frac{b_ib_j}{\bar{b}^2} \nonumber\\
        &\geq \frac{-2\nu g^{ii}}{\rho} - 4\nu \frac{g^{ii}y_i^2}{\rho^2} + \alpha g^{ii}\lambda_i + \alpha y\cdot D\Theta \nonumber\\
        &\qquad + \beta g^{ii}\lambda_i^2 + \beta Du\cdot D\Theta + \bar{b}g^{ii}\left(\frac{b_i}{\bar{b}}\right)^2\left(\epsilon - \frac{1}{\bar{b}}\right) +  g^{ii}\lambda_i \Theta_i \left(\frac{b_i}{\bar{b}}\right) - \frac{C(\Theta)}{\bar{b}}
    \end{align*}
    where the last line comes from the Jacobi inequality \eqref{degJI} or \eqref{JI2} with $\epsilon = \sin(|\Theta(y)|)/4$ or $\epsilon = 3/8$ and their respective constants $C(\Theta)$. Rearranging the above inequalities, we get
    \begin{align}
        \frac{b_i}{\bar{b}} &= 2\nu\frac{y_i}{\rho} - \alpha y_i\lambda_i - \beta u_i\lambda_i \label{gradP2}\\
        2\nu\frac{g^{ii}}{\rho} + 4\nu\frac{g^{ii}y_i^2}{\rho^2}&\geq \alpha g^{ii}\lambda_i + \beta g^{ii}\lambda_i^2 + (\alpha y +  \beta Du)\cdot D\Theta \nonumber\\
        &\qquad + \bar{b}g^{ii}\left(\frac{b_i}{\bar{b}}\right)^2\left(\epsilon - \frac{1}{\bar{b}}\right) +  g^{ii}\lambda_i\Theta_i \left(\frac{b_i}{\bar{b}}\right) - \frac{C(\Theta)}{\bar{b}}. \label{HessP2}
    \end{align}

    \textbf{Case 1.} First, suppose that $0\leq \Theta(y) < \frac{\pi}{2}$. It follows that $\lambda_1\geq|\lambda_2|$ and $\lambda_2 < 0$. We can further assume that $P>>1$ (as otherwise $P < C$ and we are done), and hence $\lambda_1>>1$.
    \begin{enumerate}
        \item[Case 1.1.] We first assume $\frac{\sin \Theta}{4} = \epsilon \leq \tau/\bar{b}$ with $\tau = o(\bar{b}) = \bar{b}^q$ for some $0 < q < 1$ to be chosen later and that $\epsilon$ is small, possibly zero. Hence $|\lambda_2|>>1$ as well. 
        
        Plugging \eqref{gradP2} into \eqref{HessP2}, we get
        \begin{align}
            \bar{b}\left(\epsilon - \frac{1}{\bar{b}}\right)g^{ii}\left(\frac{b_i}{\bar{b}}\right)^2 &\geq -g^{ii}\left(\frac{b_i}{\bar{b}}\right)^2\nonumber\\
            &\geq -3g^{ii}\left(4\nu^2\frac{y_i^2}{\rho^2}+ \alpha^2 y_i^2\lambda_i^2 + \beta^2u_i^2\lambda_i^2\right)\nonumber\\
            &\geq -12g^{ii}\nu^2\frac{y_i^2}{\rho^2} - 3\alpha^2 - 3\beta^2\Gamma^2\label{bisquare}
        \end{align}
        and
        \begin{align}
             g^{ii}\lambda_i\Theta_i\frac{b_i}{\bar{b}} &= g^{ii}\lambda_i\Theta_i\frac{2\nu y_i}{\rho} - g^{ii}\lambda_i^2\Theta_i\alpha y_i - g^{ii}\lambda_i^2\Theta_i\beta u_i\nonumber\\
            &\geq -2\nu^2 \frac{g^{ii}y_i^2}{\rho^2} - \frac{1}{2}(\alpha^2 + \beta^2\Gamma^2) - |D\Theta(y)|.^2\label{theta1}
        \end{align}
        Lastly, we bound
        \begin{equation}\label{thetadot}
            (\alpha y +  \beta Du)\cdot D\Theta \geq - \frac{1}{2}(\alpha^2 + \beta^2\Gamma^2) - \frac{1}{2}|D\Theta(y)|^2.
        \end{equation}
        Hence, using \eqref{bisquare}, \eqref{theta1}, and \eqref{thetadot}, the inequality in \eqref{HessP2} becomes
        \begin{align}
            2\nu\frac{g^{ii}}{\rho} &+ (4\nu+14\nu^2)\frac{g^{ii}y_i^2}{\rho^2}-\alpha g^{ii}\lambda_i + \frac{3}{2}|D\Theta(y)|^2 + \frac{C(\|D^2\Theta\|_{L^\infty(B_1(0))})}{\bar{b}} \nonumber\\
            &\qquad\geq  \beta/2 - 4(\alpha^2 +  \beta^2\Gamma^2).\label{rhs}
        \end{align}

        We now seek to bound
        \begin{equation}\label{lhs}
            2\nu\frac{g^{ii}}{\rho} + (4\nu+14\nu^2)\frac{g^{ii}y_i^2}{\rho^2}-\alpha g^{ii}\lambda_i + \frac{3}{2}|D\Theta(y)|^2 + \frac{C(\|D^2\Theta\|_{L^\infty(B_1(0))})}{\bar{b}} 
        \end{equation}
        from above. From \eqref{s}, we also get that
        \begin{equation}\label{s2}
            \Theta = \arctan(-1/\lambda_1) + \arctan(-1/\lambda_2).
        \end{equation}
        From \eqref{s2}, taking $\epsilon=\frac{\sin\Theta}{4}$ to be small we get
        \begin{equation}\label{thetab}
            \frac{\Theta }{8}\leq \frac{\sin\Theta }{4}\leq \epsilon \leq \frac{\tau}{\bar{b}} = \frac{1}{\bar{b}^{1-q}},
        \end{equation}
        from which it follows that
        \begin{equation*}
            \arctan(1/|\lambda_2|)\leq 8\frac{\tau}{\bar{b}} + \arctan(1/\lambda_1).
        \end{equation*}
        Using the fact that $|\lambda_2|>1$, we get
        \begin{equation}\label{lambda2}
            \arctan(1/|\lambda_2|)\geq \arctan(1)\frac{1}{|\lambda_2|} = \frac{\pi}{4}\frac{1}{|\lambda_2|}.
        \end{equation}
        Similarly, from $\lambda_1 > 1$, it follows that
        \begin{equation}\label{lambda1}
            \arctan(1/\lambda_1)\leq C(p)\frac{1}{\bar{b}^p}
        \end{equation}
        for any $0 < p \leq 1$. The full calculations of \eqref{lambda2} and \eqref{lambda1} are shown in the Appendix. Altogether this yields
        \begin{equation}\label{lamb2}
            \frac{1}{|\lambda_2|}\leq C(q)\frac{1}{\bar{b}^{1-q}}.
        \end{equation}
        The two terms with $\rho$ in \eqref{lhs} are bounded above by 
        \begin{equation}\label{lhsg}
            2\nu\frac{g^{ii}}{\rho} + (4\nu+14\nu^2)\nu\frac{g^{ii}y_i^2}{\rho^2}\leq 20\nu^2\frac{g^{22}}{\rho^2}\leq 20\nu^2\frac{1}{\rho^2|\lambda_2|^2}\leq 20\nu^2\frac{1}{\rho^2|\lambda_2|}.
        \end{equation}
        The $\alpha$ term in \eqref{lhs} is bounded above by
        \begin{equation}\label{lhsa}
            -\alpha g^{11}\lambda_1 - \alpha g^{22}\lambda_2 \leq 2|\alpha| \frac{|\lambda_2|}{1 + \lambda_2^2}\leq |\alpha| \frac{2}{|\lambda_2|}\leq |\alpha|\frac{2}{\rho^2|\lambda_2|}.
        \end{equation}
        We then bound the fourth term in \eqref{lhs} above by
        \begin{equation}\label{lhsthet1}
            \frac{3}{2}|D\Theta(y)|^2\leq \frac{3}{2}\Theta(y)\left(\Theta(y) + 2\|D^2\Theta\|_{L^\infty(B_1(0))}\right)\leq \frac{C(\|D^2\Theta\|_{L^\infty(B_1(0))})}{\rho^2\bar{b}^{1-q}}
        \end{equation}
        where we again use \eqref{thetab} and apply the interpolation inequality from \cite[Lemma 7.7.2]{lhorm1} to the function $\max\{0, \Theta\}$ which is $C^{1,1}$ by the assumption $D\Theta = 0$ on $\{\Theta = 0\}$.
        
        We can bound the remaining term of \eqref{lhs} by
        \begin{equation}\label{lhsthet2}
            \frac{C(\|D^2\Theta\|_{L^\infty(B_1(0))})}{\bar{b}}\leq \frac{C(\|D^2\Theta\|_{L^\infty(B_1(0))})}{\rho^2\bar{b}^{1-q}}
        \end{equation}
        since $\rho < 1$ and $\bar{b}>1$.
        
        By combining \eqref{lhsg}, \eqref{lhsa},\eqref{lhsthet1},\eqref{lhsthet2}, and \eqref{lamb2} with \eqref{lhs} and \eqref{rhs}, we get
        \begin{equation}\label{beta}
             \beta/2 - 4(\alpha^2 +  \beta^2\Gamma^2) \leq \frac{C(\nu,q,|\alpha|,\|D^2\Theta\|_{L^\infty(B_1(0))})}{\rho^2 \bar{b}^{1-q}}.
        \end{equation}
        Now choosing
        \begin{equation}\label{albet}
            \alpha^2 < \frac{\beta}{292}\gamma^{2/3} < \frac{\beta}{32} \quad\text{and}\quad \beta\Gamma^2 < \frac{1}{292}\gamma^{2/3}< \frac{1}{32},
        \end{equation}
        equation \eqref{beta} yields
        \begin{equation}
            \rho \leq \frac{C(\nu,q,|\alpha|,\|D^2\Theta\|_{L^\infty(B_1(0))})}{\sqrt{\beta}\bar{b}^\frac{1-q}{2}}.
        \end{equation}
        Hence,
        \begin{align*}
            e^P\leq C(|\alpha|,\beta,\Gamma)\rho^\nu \bar{b}&\leq C(\nu,|\alpha|,\beta,\Gamma,\|D^2\Theta\|_{L^\infty(B_1(0))}) \frac{\bar{b}}{\bar{b}^\frac{\nu(1-q)}{2}}\\
            &\leq C(\nu,|\alpha|,\beta,\Gamma,\|D^2\Theta\|_{L^\infty(B_1(0))})
        \end{align*}
        for $1 - \nu(1-q)/2 = 0$. We will take $\nu = 6, q = 2/3$ from now on to get that $e^P$, and hence $P$, is bounded.

    \item[Case 1.2.] Now, assume that $\epsilon\bar{b}\geq \tau = \bar{b}^\frac{2}{3}$ and at the maximum point that $|
    y_1|\geq |y|/\sqrt{2}\geq 1/(2\sqrt{2})$. We note that now $\epsilon > 0$. We may assume $\bar{b} >\frac{2}{\epsilon}$ so that the inequality in \eqref{HessP2} becomes
        \begin{align*}
            \frac{72}{\rho^2} + |\alpha| &\geq (\alpha y + \beta Du)\cdot D\Theta + \frac{\epsilon}{2}\bar{b}g^{ii}\left(\frac{b_i}{\bar{b}}\right)^2 + g^{ii}\lambda_i\Theta_i\left(\frac{b_i}{\bar{b}}\right) - \frac{C(\|D^2\Theta\|_{L^\infty(B_1(0))})}{\bar{b}}\\
            &\geq -\frac{1}{2}(\alpha^2 + \beta^2\Gamma^2) -3|D\Theta(y)|^2\\
            &\qquad\qquad+ \frac{\bar{b}^\frac{2}{3}}{4}g^{ii}\left(\frac{b_i}{\bar{b}}\right)^2  - \frac{C(\|D^2\Theta\|_{L^\infty(B_1(0))})}{\bar{b}}
        \end{align*}
        where the first line uses
        \[
        12\frac{g^{ii}}{\rho}+24\frac{g^{ii}y_i^2}{\rho^2} < g^{22}\frac{72}{\rho^2} < \frac{72}{\rho^2}
        \]
        and
        \[
        -\alpha g^{11}\lambda_1 - \alpha g^{22}\lambda_2 \leq 2|\alpha| \frac{|\lambda_2|}{1 + \lambda_2^2}\leq |\alpha|.
        \]
        The last line uses Young's inequality
        \[
        g^{ii}\lambda_i\Theta_i\left(\frac{b_i}{\bar{b}}\right) \geq -\frac{\bar{b}^\frac{2}{3}}{4}g^{ii}\left(\frac{b_i}{\bar{b}}\right)^2 -\frac{1}{\bar{b}^\frac{2}{3}}|D\Theta(y)|^2 \geq -\frac{\bar{b}^\frac{2}{3}}{4}g^{ii}\left(\frac{b_i}{\bar{b}}\right)^2 -|D\Theta(y)|^2
        \]
        and the fact that $1 < \bar{b}^\frac{2}{3}$. We rearrange the above inequality to get
        \begin{align}
            \frac{72}{\rho^2} + |\alpha|&+ \frac{C(\|D^2\Theta\|_{L^\infty(B_1(0))})}{\rho^2\bar{b}} + \frac{1}{2}(\alpha^2 + \beta^2\Gamma^2) + |D\Theta(y)|^2\nonumber\\
            & \geq  \frac{\bar{b}^\frac{2}{3}}{4}g^{ii}\left(\frac{b_i}{\bar{b}}\right)^2\geq  \frac{\bar{b}^\frac{2}{3}}{4}g^{11}\left(\frac{b_1}{\bar{b}}\right)^2.\label{cs2rho}
        \end{align}
        Using \eqref{gradP2}, we get 
        \begin{equation}\label{case2}
            \left(\frac{b_1}{\bar{b}}\right)^2 = \left(y_1\left(\frac{12}{\rho} - \alpha\lambda_1\right) - \beta u_1\lambda_1\right)^2.
        \end{equation}
        Here, we can assume that $\frac{|\alpha|}{24}\lambda_1 > 1/\rho$ since otherwise $e^P=\rho^6\bar{b}\leq \rho \lambda_1 < \frac{24}{|\alpha|}$ and $P\leq C(|\alpha|)$. Thus, \eqref{case2} becomes
        \begin{align}
            \left(\frac{b_1}{\bar{b}}\right)^2 &= \left(y_1\left(\frac{12}{\rho} - \alpha\lambda_1\right) - \beta u_1\lambda_1\right)^2\nonumber\\
            &=  y_1^2\left(\frac{12}{\rho} - \alpha\lambda_1\right)^2 - 2y_1\left(\frac{12}{\rho} - \alpha\lambda_1\right)\beta u_1\lambda_1 - \beta^2 u_1^2\lambda_1^2\nonumber\\
            &\geq y_1^2\alpha^2\lambda_1^2/8 -2\beta^2\Gamma^2\lambda_1^2 \nonumber\\
            &\geq (\alpha^2/64 - 2\beta^2\Gamma^2)\lambda_1^2\label{c2alph1}\\
            &\geq \alpha^2\lambda_1^2/128\label{c2alph2}
        \end{align}
        where \eqref{c2alph1} follows from the assumption on $|y_1|$ and \eqref{c2alph2} follows from choosing
        \begin{equation}\label{albet2}
            \beta\Gamma < \frac{|\alpha|}{16}
        \end{equation}
        which is consistent with \eqref{albet}. We now bound \eqref{cs2rho} below by
        \begin{align*}
            C(|\alpha|,\beta,\Gamma,\|D^2\Theta\|_{L^\infty(B_1(0))})&\geq \rho^2\bar{b}^\frac{2}{3}g^{11}\left(\frac{b_1}{\bar{b}}\right)^2\\
            &\geq \rho^2\bar{b}^\frac{2}{3}g^{11}\alpha^2\lambda_1^2/128\\
            &\geq \rho^2\bar{b}^\frac{2}{3}\alpha^2/256
        \end{align*}
        where the last line follows $g^{11}\lambda_1^2>1/2$ as we assume $\lambda_1>1$. Hence,
        \begin{align*}
            e^P &\leq C(|\alpha|,\beta,\Gamma)\rho^6\bar{b} \leq C(|\alpha|,\beta,\Gamma)\rho^3\bar{b}\\
            &= C(|\alpha|,\beta,\Gamma)(\rho^2\bar{b}^\frac{2}{3})^\frac{3}{2} \leq C(|\alpha|,\beta,\Gamma,\|D^2\Theta\|_{L^\infty(B_2(0))}),
        \end{align*}
        so $P$ is again bounded.
        
    \item[Case 1.3.] Assume again that $\epsilon\bar{b} \geq \tau = \bar{b}^\frac{2}{3}$ but now at the maximum point, $|y_2|\geq |y|/\sqrt{2}\geq 1/(2\sqrt{2})$. We can again assume that $\bar{b} > \frac{2}{\epsilon}$. We may also assume that $\rho|\lambda_2| \leq \frac{24}{|\alpha|}$ as otherwise we follow the method in case 1.2 above. The inequality in \eqref{HessP2}  becomes
        \begin{align}
            g^{22}\frac{72}{\rho^2} &+ \frac{C(\|D^2\Theta\|_{L^\infty(B_1(0))})}{\bar{b}}\label{case3ineq}\\
            &\geq \frac{\beta}{2} -|\alpha|g^{11}\lambda_1 - |\alpha| g^{22}|\lambda_2| -(|\alpha| + \beta\Gamma)|D\Theta(y)| + \frac{\bar{b}^{\frac{2}{3}}}{2}g^{ii}\left(\frac{b_i}{\bar{b}}\right)^2 + g^{ii}\lambda_i\Theta_i\left(\frac{b_i}{\bar{b}}\right)^2.\nonumber
        \end{align}
    We first show that
    \begin{equation}\label{dthet}
        |D\Theta(y)|^2\leq \frac{C(\|D^2\Theta\|_{L^\infty(B_1(0))})}{\sqrt{1+\lambda_2^2}}.
    \end{equation}
    If $|\lambda_2| > 1$, then by the interpolation inequality \cite[Lemma 7.7.2]{lhorm1}, \eqref{s2}, and the calculations in the Appendix, we get
    \begin{align*}
        |D\Theta(y)|^2&\leq \Theta(y)C(\|D^2\Theta\|_{L^\infty(B_1(0))})\\
        &\leq 2\arctan\frac{1}{|\lambda_2|}C(\|D^2\Theta\|_{L^\infty(B_1(0))})\\
        &\leq \frac{C_1(\|D^2\Theta\|_{L^\infty(B_1(0))})}{\sqrt{1+\lambda_2^2}}.
    \end{align*}
    On the other hand if $|\lambda_2| < 1$, by the interpolation inequality \cite[Lemma 7.7.2]{lhorm1}, we get
    \begin{equation*}
        |D\Theta(y)|^2\leq \frac{\sqrt{2}C(\|D^2\Theta\|_{L^\infty(B_1(0))})}{\sqrt{2}}\leq \frac{C_2(\|D^2\Theta\|_{L^\infty(B_1(0))})}{\sqrt{1+\lambda_2^2}}.
    \end{equation*}
    Taking $C = \max\{C_1,C_2\}$, we get \eqref{dthet}. 
        
    Using Young's inequality with $p = 4/3$ and $q = 4$, we get
    \begin{align}
        -(|\alpha| + \beta\Gamma)|D\Theta(y)| &\geq -\frac{3}{4}(\alpha^\frac{4}{3} + \beta^\frac{4}{3}\Gamma^\frac{4}{3}) - \frac{1}{4}|D\Theta(y)|^4\nonumber\\
        &\geq -\frac{3}{4}(\alpha^\frac{4}{3} + \beta^\frac{4}{3}\Gamma^\frac{4}{3}) - g^{22}\frac{C(\|D^2\Theta\|_{L^\infty(B_1(0))})}{4}\label{youngs1}.
    \end{align}
    Using Young's inequality with $p=q = 2$, we get
    \begin{equation}\label{youngs2}
        g^{ii}\lambda_i\Theta_i\left(\frac{b_i}{\bar{b}}\right)\geq -\frac{\bar{b}^\frac{2}{3}}{4}g^{ii}\left(\frac{b_i}{\bar{b}}\right)^2 - \frac{1}{\bar{b}^\frac{2}{3}}|D\Theta(y)|^2
    \end{equation}
    and 
    \begin{align}
        -|\alpha|g^{11}\lambda_1 &\geq -\frac{1}{2\eta}\alpha^2g^{11}\lambda_1^2 - \frac{\eta}{2}g^{11}\nonumber \\
        &\geq -\frac{1}{2\eta}\frac{\beta}{292}\gamma^{2/3} - \frac{\eta}{2}g^{22}\nonumber\\
        &= -\frac{\beta}{4\cdot 292} - \gamma^{2/3}g^{22}\nonumber\\
        &\geq -\frac{\beta}{4\cdot 292} - g^{22} \label{youngs3}
    \end{align}
    where we used \eqref{albet} in the second inequality.
    
    Combining \eqref{youngs1}, \eqref{youngs2}, and \eqref{youngs3} into \eqref{case3ineq} yields
    \begin{align}
        g^{22}\frac{72}{\rho^2}& + g^{22}\frac{C(\|D^2\Theta\|_{L^\infty(B_1(0))})}{4} +  \frac{C(\|D^2\Theta\|_{L^\infty(B_1(0))})}{\bar{b}^\frac{2}{3}}\nonumber\\
        &\geq \frac{\beta}{2}\cdot\frac{291}{292}- g^{22}\frac{24}{\rho} - g^{22} -\frac{3}{4}(\alpha^\frac{4}{3} + \beta^\frac{4}{3}\Gamma^\frac{4}{3})  + \frac{\gamma^{-2/3}}{2}g^{ii}\left(\frac{b_i}{\bar{b}}\right)^2\label{case3rhs}
    \end{align}
    where the last line uses the assumption on $|\lambda_2|$. The last term of \eqref{case3rhs} can be bounded below by
    \begin{align}
        \frac{\gamma^{-2/3}}{2}g^{ii}\left(\frac{b_i}{\bar{b}}\right)^2 &\geq \frac{\gamma^{-2/3}}{2}g^{22}\left(\frac{b_2}{\bar{b}}\right)^2\nonumber\\
        &\geq  \frac{\gamma^{-2/3}}{2}g^{22}\left(140\frac{y_2^2}{\rho^2} - 72\alpha^2y_2^2\lambda_2^2 - 72\beta^2\Gamma^2\lambda_2^2\right)\nonumber\\
        &\geq  g^{22}\frac{70}{8}\frac{\gamma^{-2/3}}{\rho^2} - \gamma^{-2/3}36\alpha^2 - \gamma^{-2/3}36\beta^2\Gamma^2.\label{case13bisq}
    \end{align}
    Plugging \eqref{case13bisq} into \eqref{case3rhs} and rearranging terms yields
    \begin{align*}
        0\geq &g^{22}\left(\frac{70}{8}\frac{\gamma^{-2/3}}{\rho^2} - \frac{72}{\rho^2} - \frac{24}{\rho}- 1 - \frac{C(\|D^2\Theta\|_{L^\infty(B_1(0))})}{4}\right)\\
        &\quad + \frac{\beta}{2}\cdot\frac{291}{292} -  \frac{C(\|D^2\Theta\|_{L^\infty(B_1(0))})}{\bar{b}^\frac{2}{3}}\\
        &\quad- 36\gamma^{-2/3}(\alpha^2 + \beta^2\Gamma^2)  -\frac{3}{4}(\alpha^\frac{4}{3} + \beta^\frac{4}{3}\Gamma^\frac{4}{3}).
    \end{align*}
    We choose $\alpha,\beta$ so that
    \begin{equation}\label{albet3}
        \alpha^\frac{4}{3} < \frac{\beta}{292}\gamma^\frac{2}{3} \quad\text{and}\quad \beta^\frac{4}{3}\Gamma^\frac{4}{3} < \frac{\beta}{292}\gamma^\frac{2}{3} 
    \end{equation}
    which is consistent with \eqref{albet} and \eqref{albet2}. From \eqref{albet} and \eqref{albet3}, the inequality above becomes
    \begin{align}
        0\geq &g^{22}\left(\frac{70}{8}\frac{\gamma^{-2/3}}{\rho^2} - \frac{72}{\rho^2} - \frac{24}{\rho} - 1 - \frac{C(\|D^2\Theta\|_{L^\infty(B_1(0))})}{4}\right)\nonumber\\
        &\quad+ \beta\left( \frac{1}{2}\cdot\frac{291}{292} - \frac{72}{292} - \frac{2}{292}\right) - \frac{C(\|D^2\Theta\|_{L^\infty(B_1(0))})}{\bar{b}^\frac{2}{3}}\nonumber\\
        = &g^{22}\left(\frac{70}{8}\frac{\gamma^{-2/3}}{\rho^2} - \frac{72}{\rho^2} - \frac{24}{\rho} - 1 - \frac{C(\|D^2\Theta\|_{L^\infty(B_1(0))})}{4}\right)\nonumber\\
        &\quad+ \beta\frac{147}{2\cdot 292} - \frac{C(\|D^2\Theta\|_{L^\infty(B_1(0))})}{\bar{b}^\frac{2}{3}}.\label{case13cont}
        \end{align}
    We may assume that
    \begin{equation}\label{c13barb}
        \bar{b}^\frac{2}{3} > \frac{2\cdot 292 C(\|D^2\Theta\|_{L^\infty(B_1(0))})}{147\beta}
    \end{equation}
    otherwise, $\bar{b}$ is bounded and we would be done.
    
    We now choose $\gamma$ to be small enough so that 
    \begin{align}
        \frac{70}{8}\frac{\gamma^{-2/3}}{\rho^2} &- \frac{72}{\rho^2} - \frac{24}{\rho}- 1 - \frac{C(\|D^2\Theta\|_{L^\infty(B_1(0))})}{4} > 0\nonumber \\
        &\text{i.e.}\quad \gamma< \left(\frac{70}{8(72 + 25 + C(\|D^2\Theta\|_{L^\infty(B_1(0))}))}\right)^\frac{3}{2}.\label{case13gam}
    \end{align}
    Combining \eqref{c13barb} and \eqref{case13gam} we contradict the inequality in \eqref{case13cont}. Hence, $y$ cannot be a maximum of $P$. 
    \end{enumerate}
    
    \textbf{Case 2.} Now, if $-\frac{\pi}{2} < \Theta(y)\leq 0$, it follows that $-\lambda_2>|-\lambda_1|$ and $-\lambda_1 < 0$. We can reduce to case $1$ as follows: Let the test function in case $1$ be denoted by $P_+$. If $P_+$ achieves its maximum at $y$ for the solution $u$, then for $v = -u$,
    \[
    P_- := \nu\ln\rho(x) - \alpha ( x\cdot Dv - v) + \beta|Dv|^2/2 +\ln\max(\bar{b},\gamma^{-1})
    \]
    achieves its maximum at $y$. We remark that $\bar{b}$ stays the same since it is symmetric and even in the eigenvalues of the Hessian. We reduce to the argument in case $1$ via the following substitutions:
    \begin{itemize}
        \item $\alpha$ becomes $-\alpha$
        \item $\Theta(y)$ becomes $-\Theta(x_0)$
        \item $\lambda_1$ becomes $-\lambda_2$
        \item $\lambda_2$ becomes $-\lambda_1$
        \item $\max\{0,\Theta\}$ becomes $\max\{0, -\Theta\}$.
    \end{itemize}
    and the result holds since the argument in case 1 is independent of the sign of $\alpha$.

    \textbf{Case 3.} If $\Theta(y) \geq\frac{\pi}{2}$, it follows that $\lambda_1 >\lambda_2 \geq 0$. We again use $P$ as in case 1, and equation \eqref{gradP2} stays the same while \eqref{HessP2} becomes (where we safely assume $1<\bar{b}/8$)
    \begin{align}
        12\frac{g^{ii}}{\rho} + 24\frac{g^{ii}y_i^2}{\rho^2}&\geq \alpha g^{ii}\lambda_i + \beta g^{ii}\lambda_i^2 + (\alpha y +  \beta Du)\cdot D\Theta \nonumber\\
        &\qquad + \frac{1}{4}\bar{b}g^{ii}\left(\frac{b_i}{\bar{b}}\right)^2 +  g^{ii}\lambda_i\Theta_i \left(\frac{b_i}{\bar{b}}\right) - \frac{C(\|D\Theta\|_{L^\infty(B_1(0))},\|D^2\Theta\|_{L^\infty(B_1(0))})}{\bar{b}}.\label{case3hessP2}
    \end{align}
    This case will be similar to case $1.3$ in that we will derive a contradiction. We bound the left-hand side of \eqref{case3hessP2} by
    \begin{equation}\label{case3lhs}
        12\frac{g^{ii}}{\rho} + 24\frac{g^{ii}y_i^2}{\rho^2} \leq 36\frac{g^{ii}}{\rho^2} \leq 72\frac{g^{22}}{\rho^2}.
    \end{equation}
    For the right hand side, using $\lambda_1 > 1$, we see
    \begin{align}
        \alpha g^{ii}\lambda_i + \beta g^{ii}\lambda_i^2 &\geq \frac{\beta}{2} -\frac{\beta}{8} - g^{22}\frac{2\alpha^2}{\beta}\nonumber\\
        &\geq \frac{3\beta}{8} - g^{22}\frac{2\gamma}{292}\nonumber\\
        &\geq \frac{3\beta}{8} - g^{22}\frac{2}{292}\label{case3alpha}
    \end{align}
    where we use the bound on $\alpha^\frac{4}{3}$ from \eqref{albet4} below since $\alpha^2 < \alpha^\frac{4}{3}$.
    Similar to case $1.3$, we bound
    \begin{equation}\label{case3LB}
        g^{ii}\lambda_i\Theta_i \left(\frac{b_i}{\bar{b}}\right) \geq -\frac{\bar{b}}{8}g^{ii}\left(\frac{b_i}{\bar{b}}\right)^2 - 2\frac{|D\Theta(y)|^2}{\bar{b}}.
    \end{equation}
    Combining \eqref{case3lhs}, \eqref{case3alpha}, \eqref{youngs1} from before, and \eqref{case3LB} into \eqref{case3hessP2}, we get   
    \begin{align}
        72\frac{g^{22}}{\rho^2}&\geq \frac{3\beta}{8} - g^{22}\frac{2}{292} -g^{22}\frac{C(\|D^2\Theta\|_{L^\infty(B_1(0))})}{4}\nonumber\\
        &\quad  \frac{\gamma^{-1}}{8}g^{ii}\left(\frac{b_i}{\bar{b}}\right)^2 - \frac{3}{4}(\alpha^\frac{4}{3}+ \beta^\frac{4}{3}\Gamma^\frac{4}{3})  - \frac{C(\|D\Theta\|_{L^\infty(B_1(0))},\|D^2\Theta\|_{L^\infty(B_1(0))})}{\bar{b}}.\label{case3full}
    \end{align}
    Using the bound \eqref{case13bisq} but with a different $\gamma$ coefficient, we see that
    \begin{equation}\label{case3bisq}
        \frac{\gamma^{-1}}{8}g^{ii}\left(\frac{b_i}{\bar{b}}\right)^2 \geq g^{22}\frac{70}{32}\frac{\gamma^{-1}}{\rho^2} - \gamma^{-1}9(\alpha^2 + \beta^2\Gamma^2).
    \end{equation}
    Plugging \eqref{case3bisq} into \eqref{case3full} and rearranging terms, we get
    \begin{align*}
        0&\geq g^{22}\left(\frac{70}{32}\frac{\gamma^{-1}}{\rho^2}  -\frac{72}{\rho^2} -\frac{2}{292} -\frac{C(\|D^2\Theta\|_{L^\infty(B_1(0))})}{4}\right)\\
        &\quad + \frac{3\beta}{8} -\gamma^{-1}9(\alpha^2 + \beta^2\Gamma^2) - \frac{3}{4}(\alpha^\frac{4}{3}+ \beta^\frac{4}{3}\Gamma^\frac{4}{3}) -  \frac{C(\|D\Theta\|_{L^\infty(B_1(0))},\|D^2\Theta\|_{L^\infty(B_1(0))})}{\bar{b}}.
    \end{align*}
    Choosing
    \begin{equation}\label{albet4}
        \alpha^\frac{4}{3} < \frac{\beta}{292}\gamma \quad\text{and}\quad \beta^\frac{4}{3}\Gamma^\frac{4}{3} < \frac{\beta}{292}\gamma,
    \end{equation}
    which is consistent with \eqref{albet}, \eqref{albet2}, and \eqref{albet3}, we get
    \begin{align*}
         0&\geq g^{22}\left(\frac{70}{32}\frac{\gamma^{-1}}{\rho^2}  -\frac{72}{\rho^2}-\frac{2}{292}-\frac{C(\|D^2\Theta\|_{L^\infty(B_1(0))})}{4} \right)\\
         &\quad+ \beta\left(\frac{3}{8}-\frac{18}{292} - \frac{3}{2\cdot292}\right) -  \frac{C(\|D\Theta\|_{L^\infty(B_1(0))},\|D^2\Theta\|_{L^\infty(B_1(0))})}{\bar{b}}\\
         &= g^{22}\left(\frac{70}{32}\frac{\gamma^{-1}}{\rho^2}  -\frac{72}{\rho^2}-\frac{2}{292} --\frac{C(\|D^2\Theta\|_{L^\infty(B_1(0))})}{4}\right)\\
         &\quad+ \beta\frac{45}{146} -  \frac{C(\|D\Theta\|_{L^\infty(B_1(0))},\|D^2\Theta\|_{L^\infty(B_1(0))})}{\bar{b}}.
    \end{align*}
    We may assume that 
    \[
    \bar{b} > \frac{146 C(\|D\Theta\|_{L^\infty(B_1(0))},\|D^2\Theta\|_{L^\infty(B_1(0))})}{45\beta}
    \]
    otherwise $\bar{b}$ is bounded, and we would be done.
    
    Lastly, choosing
    \[
    \gamma < \frac{32}{70(73 + C(\|D^2\Theta\|_{L^\infty(B_1(0))}))}
    \]
    we derive a contradiction that $y$ was a maximum point of $P$.

    \textbf{Case 4.} Lastly, we will assume $-\pi<\Theta(y) \leq -\frac{\pi}{2}$, from which it follows that $\lambda_2 < \lambda_1 < 0$. Now the assumption that $P>>1$ implies that $-\lambda_2 >> 1$. We use the reflection argument (see case 2) applied to case 3, which is proven independent of the sign of $\alpha$.
    \end{proof}
    We remark that the choices of the constants in \eqref{albet}, \eqref{albet2}, \eqref{albet3}, and \eqref{albet4}, are all consistent. The constants $\gamma$ and $\Gamma$ are fixed constants determined by $\Theta$ and $\|u\|_{C^1(B_1(0))}$ respectively. We first choose $\alpha$ and $\beta$ such that \eqref{albet2} and the left inequality of \eqref{albet4} are satisfied:
    \begin{equation}\label{ogalbet}
        \frac{\alpha^\frac{4}{3}}{\beta}< \frac{\gamma}{292} \quad\text{and}\quad 16\Gamma < \frac{|\alpha|}{\beta}.
    \end{equation}
    We achieve this by choosing $\alpha$ and $\beta$ such that $\alpha^\frac{4}{3} < \beta < |\alpha|$. The inequalities in \eqref{ogalbet} are enough to prove that the remaining conditions in \eqref{albet}, \eqref{albet3}, and \eqref{albet4} will hold. From the two inequalities in \eqref{ogalbet}, we automatically get
    \[
    \beta^\frac{4}{3}\Gamma^\frac{4}{3} < \frac{\alpha^\frac{4}{3}}{16^\frac{4}{3}}< \frac{\beta}{16^\frac{4}{3}292}\gamma < \frac{\beta}{292}\gamma < \frac{\beta}{292}\gamma^\frac{2}{3}
    \]
    which proves the right inequality in \eqref{albet4} and further implies \eqref{albet3}. We next observe that
    \[
    \alpha^2 = \left(\alpha^\frac{4}{3}\right)^\frac{3}{2} < \frac{\beta^\frac{3}{2}}{292^\frac{3}{2}}\gamma^\frac{3}{2} < \frac{\beta}{292}\gamma 
    \]
    and the right inequality in \eqref{ogalbet} together imply \eqref{albet}.   

\section{Alexandrov-type Theorem}\label{sec_alexandrv}
We seek to prove the following theorem, which is inspired by \cite{RY3}.
\begin{theorem}\label{alexandrov}
    Let $u_k$ be a sequence of smooth solutions to \eqref{s} for some $\Theta_k$ on $B_R(0)$. Suppose that $u_k$ converges uniformly to $u$, a viscosity solution of \eqref{s} on $B_R(0)$, where the sequence of phases $\Theta_k$ satisfy $\|D\Theta_k\|_{L^\infty(B_R(0))},\|D^2\Theta_k\|_{L^\infty(B_R(0))}< K_2$ and $\Theta_k \to \Theta$ in $C^{1,\alpha}$ for $\alpha \in (0,1)$. Then $u$ is twice differentiable almost everywhere in $B_R(0)$, i.e., for any $x\in B_R(0)$, there exists a quadratic polynomial $Q$ such that
    \[
    \sup_{y\in B_r(x)}|u(y)-Q(y)|= o(r^2).
    \]
\end{theorem}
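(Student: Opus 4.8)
The plan is to derive Theorem~\ref{alexandrov} from two a priori bounds for the smooth solutions $u_k$, proven independently (and, unlike the Jacobi inequality of Section~\ref{sec_jac}, not requiring $D\Theta_k=0$ on $\{\Theta_k=0\}$): a \emph{scale-invariant interior gradient estimate}
\[
\sup_{B_{\rho/2}(x_0)}|Du_k|\ \le\ C\!\left(\osc_{B_{\rho}(x_0)}u_k,\ \|D\Theta_k\|_{L^\infty},\ \|D^2\Theta_k\|_{L^\infty},\ \rho\right),
\]
and an \emph{integral estimate for the volume form} $V_k=\sqrt{\det g_k}=\sqrt{(1+\lambda_1^2)(1+\lambda_2^2)}$,
\[
\int_{B_{\rho}(x_0)}V_k\,dx\ \le\ C\!\left(\rho,\ \osc_{B_{2\rho}(x_0)}u_k,\ \|D\Theta_k\|_{L^\infty},\ \|D^2\Theta_k\|_{L^\infty}\right).
\]
Since $u_k\to u$ uniformly, the first bound shows $u$ is locally Lipschitz and $\{u_k\}$ is bounded in $C^{0,1}_{\mathrm{loc}}(B_R)$; since $|D^2u_k|\le V_k$ pointwise, the second bounds $\{u_k\}$ in $W^{2,1}_{\mathrm{loc}}(B_R)$. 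Passing to a subsequence, $Du_k\to Du$ in $L^1_{\mathrm{loc}}$ and a.e., $V_k\,dx\rightharpoonup\mu_V$ and $D^2u_k\rightharpoonup\mu:=D^2u$ weakly-$*$ as (matrix-valued) Radon measures with $|\mu|\le\mu_V$ locally finite; in particular $Du\in BV_{\mathrm{loc}}(B_R)$.

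Next I would blow up at a Lebesgue-generic point. Write $\mu=A\,dx+\mu_s$; at $\mathcal L^2$-a.e.\ $x$ we may assume that $x$ is a Lebesgue point of $A$ and of the density of $\mu_V$, that $|\mu_s|(B_r(x))+\mu_V^{\mathrm{sing}}(B_r(x))=o(r^2)$, that $Du_k(x)\to Du(x)$, and (the standard a.e.\ $L^1$-differentiability of $BV$ maps) that $\int_{B_r(x)}|Du(y)-Du(x)-A(x)(y-x)|\,dy=o(r^{3})$. Fix such an $x$. Subtracting the affine function $y\mapsto u(x)+Du(x)\cdot(y-x)$, and correspondingly normalizing each $u_k$, we arrange $u(x)=0=Du(x)$ and $u_k(x)=0=Du_k(x)$; this changes neither the equation nor the assertion. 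Set $v_{k,r}(z):=r^{-2}u_k(x+rz)$, a smooth solution on $B_{R'/r}$ of $\arctan\lambda_1+\arctan\lambda_2=\Theta_k(x+rz)$, with $D^2v_{k,r}(z)=(D^2u_k)(x+rz)$, $v_{k,r}(0)=0$, $Dv_{k,r}(0)=0$, and rescaled phase satisfying $\|D[\Theta_k(x+r\,\cdot)]\|_{L^\infty(B_\rho)}=r\|D\Theta_k\|_{L^\infty(B_{\rho r}(x))}\to0$. The rescaled volume bound $\int_{B_\rho}V(D^2v_{k,r})\,dz=r^{-2}\int_{B_{\rho r}(x)}V(D^2u_k)\,dy$ stays $\le C(x)\rho^2$ uniformly in small $r$ and large $k$ because $V_k\,dx\rightharpoonup\mu_V$ and $\mu_V$ has finite density at $x$; hence $\int_{B_\rho}|D^2v_{k,r}|\le C(x)\rho^2$. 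Since $n=2$, the borderline embedding $W^{2,1}(B_\rho)\hookrightarrow C^0(B_\rho)$ — applied modulo affine functions, absorbing lower-order terms by Poincar\'e and using $\int_{B_\rho}|Dv_{k,r}-A(x)z|\,dz\to0$ — gives $\osc_{B_\rho}v_{k,r}\le C(x)\rho^2$; the gradient estimate then yields $\sup_{B_{\rho/2}}|Dv_{k,r}|\le C(x,\rho)$, so $\{v_{k,r}\}$ is locally uniformly bounded and equi-Lipschitz. Along a diagonal $k=k(r)\to\infty$ and by Arzel\`a--Ascoli, $v_{k(r),r}\to v_\infty$ locally uniformly in $\re^2$ (a viscosity solution of $\arctan\lambda_1+\arctan\lambda_2=\Theta(x)$), while $\int_{B_\rho}|Dv_{k(r),r}(z)-A(x)z|\,dz=r^{-3}\int_{B_{\rho r}(x)}|Du_{k(r)}(y)-A(x)(y-x)|\,dy\to0$ forces $Dv_\infty\equiv A(x)$, so $v_\infty(z)=\tfrac12 z^{T}A(x)z=:Q(z)$. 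The limit $Q$ being independent of the subsequence, $v_r\to Q$ locally uniformly along the full family, i.e.
\[
\sup_{B_r(x)}\big|u(y)-u(x)-Du(x)\!\cdot\!(y-x)-\tfrac12(y-x)^{T}A(x)(y-x)\big|=r^2\sup_{B_1}|v_r-Q|=o(r^2),
\]
which is the assertion.

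The main obstacle is the volume estimate: with no Hessian bound on $u_k$ available, $\int_{B_\rho}V_k$ must be controlled purely from $\osc_{B_{2\rho}}u_k$ and the phase. I expect to prove it geometrically: the gradient estimate confines the Lagrangian graph $\Sigma_k=\{(x,Du_k(x)):x\in B_{3\rho/2}\}$ to a bounded box in $\re^2\times\re^2$, and a first-variation/monotonicity argument for $\Sigma_k$ — exploiting that its generalized mean curvature is bounded, $|\vec H_k|=|\nabla_{g_k}\Theta_k|\le\|D\Theta_k\|_{L^\infty}$, together with a boundary-area bound obtained along a good radius via the coarea formula — converts this confinement into the interior area bound $\int_{B_\rho}V_k\,dx\le C$. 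A secondary delicate point, genuinely two-dimensional, is that the compactness of the blow-ups $v_{k,r}$ hinges on the embedding $W^{2,1}\hookrightarrow C^0$ to turn the scale-localized volume bound into the oscillation bound feeding the gradient estimate; it is this chain that makes the \emph{scale-invariant} form of the gradient estimate necessary. The remaining ingredients — weak-$*$ compactness of Radon measures, the fine structure of $BV$, stability of viscosity solutions under locally uniform convergence with converging right-hand sides, and Rademacher's theorem — are standard.
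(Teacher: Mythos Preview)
Your scaffolding matches the paper: a scale-invariant interior gradient estimate plus an $L^1$ bound on the volume form $V=\sqrt{\det g}$ for the smooth approximants $u_k$, then passage to the limit and an Alexandrov-type argument. The paper, however, does not deduce twice differentiability via your blow-up chain ($W^{2,1}\hookrightarrow C^0$, rescaled gradient estimate, viscosity limit at constant phase). Instead it verifies directly that $u$ is locally Lipschitz, that $Du_k\to Du$ and $D^2u_k\to D^2u$ weakly (the latter dominated by $V_k$, uniformly in $L^1$), and then quotes the classical Alexandrov argument of \cite{EG,CT,RY3} once the conditions \eqref{loclip}--\eqref{lebesguesing} hold. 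Your route is more hands-on but not obviously shorter, and the diagonal step (relating $Dv_{k(r),r}$ to the $BV$ blow-up of $Du$, not $Du_k$) needs more care than you indicate.

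The genuine gap is your proposed proof of the volume bound. A monotonicity/first-variation argument for the Lagrangian graph with $|\vec H|\le\|D\Theta\|_{L^\infty}$ gives \emph{lower} density bounds, not an interior \emph{upper} bound on area; closing it would require controlling the boundary length $\mathrm{length}(\Sigma_k\cap\{|x|=r\})$, and your ``good radius via coarea'' is circular because the coarea integrand already contains $V_k$. The paper's argument is entirely different and algebraic: from \eqref{tanpde} one has the identities $V=|\sec\Theta|\,|1-\sigma_2|$ and $V=|\csc\Theta|\,|\sigma_1|$. Partitioning the phase range $(-\pi,\pi)$ by smooth cutoffs $\rho_j(\Theta)$ so that on each piece one of $\sec\Theta$, $\csc\Theta$ is bounded, one integrates $\chi(x/R)\rho_j(\Theta)\sigma_1$ and $\chi(x/R)\rho_j(\Theta)\sigma_2$ by parts using the divergence structure $k\sigma_k(D^2u)=\mathrm{div}(L_{\sigma_k}Du)$; two integrations by parts move all derivatives off $u$ and yield a bound in terms of $\|Du\|_{L^\infty}^2$, $\|D\Theta\|_{L^\infty}$, $\|D^2\Theta\|_{L^\infty}$. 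A minor correction: the paper's gradient estimate (Lemma~\ref{gradlem}) \emph{does} use the hypothesis $D\Theta=0$ on $\{\Theta=0\}$ (via the interpolation inequality applied to $\max\{0,\Theta\}$), contrary to your parenthetical; only the volume bound is free of it.
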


To prove this, it suffices to show that for the viscosity solution $u$, we get
\begin{align}
    \sup_{x,y \in \Omega: x\neq y}d_{x,y}^{n+1}\frac{|u(x)-u(y)|}{|x-y|}&\leq C(n)\int_\Omega |u|dx\label{loclip}\\
    \lim_{r\to 0}\fint_{B_r(x)}|Du(y) - Du(x)|dy &= 0\quad\text{a.e. }x\in B_R(0)\label{weakD1}\\
    \lim_{r\to 0}\fint_{B_r(x)}|D^2u(y) - D^2(x)|dy &= 0 \quad\text{a.e. }x\in B_R(0)\label{weakD2}\\
    \lim_{r\to 0} \frac{1}{r^n}\|[D^2u]_s\|(B_r(x)) &= 0\quad\text{a.e. }x\in B_R(0)\label{lebesguesing}.
\end{align}
The first line states that $u$ is locally Lipschitz. For more general operators, we actually need that $u - L$, where $L$ is any linear function, satisfies \eqref{loclip}; however, in this case, if $u$ solves \eqref{s}, then so does $u - L$ and \eqref{loclip} will hold true for $u-L$. The second line states that $u$ is differentiable almost everywhere with the weak gradient $Du \in L^\infty_{loc}$. The third and fourth lines state the almost everywhere existence of $D^2u$ as a Radon measure and the Lebesgue decomposition of that measure. If the four conditions above are satisfied, the proof of Theorem \ref{alexandrov} follows exactly as in \cite{RY3},\cite{EG}, or \cite{CT}.

In order to prove \eqref{loclip}-\eqref{lebesguesing}, we need the following two lemmas.

\subsection{Scale Invariant Gradient Estimate}
This follows by adopting a similar strategy to \cite{BMS}. We will include details here for the convenience of the readers using notation from \cite{BMS}.
\begin{lemma}\label{gradlem}
    Let $u$ be a $C^3(\overline{B_R(0)})$ solution of \eqref{s} on $B_R(0)\subset \re^2$, where $\Theta \in C^2(B_R(0))$ satisfies $-\pi < \Theta < \pi$ and $D\Theta = 0$ on the level set $\{\Theta = 0\}$. Then
    \begin{equation}\label{gradest}
        R|Du(0)|\leq C(n,\|D^2\Theta\|_{L^\infty(B_R(0))})(\osc_{B_R(0)}u)(1+\osc_{B_R(0)}u).  
    \end{equation}
\end{lemma}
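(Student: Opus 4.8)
The plan is to run a Bernstein-type maximum-principle argument on the gradient graph, in the spirit of \cite{BMS}, with the cutoff tuned to the oscillation so that the resulting constant is scale invariant. Since \eqref{s} is invariant under $u(x)\mapsto R^{-2}u(Rx)$, which preserves $D^2u$ and replaces $\Theta(x)$ by $\Theta(Rx)$, after subtracting a constant I would reduce to proving the estimate on $B_1(0)$ with $0\le u\le M:=\osc_{B_1(0)}u$, restoring the factor $R$ by unwinding the scaling at the end. The first thing to record is that $D\Theta=0$ on $\{\Theta=0\}$ makes $\max\{0,\pm\Theta\}$ a nonnegative $C^{1,1}$ function whose Hessian is bounded by $\|D^2\Theta\|_{L^\infty}$, so the pointwise interpolation inequality \cite[Lemma 7.7.2]{lhorm1} used in Section \ref{sec_jac} gives, on $B_{3/4}(0)$ say,
\[
|D\Theta(x)|^2 \ \le\ C\big(\|D^2\Theta\|_{L^\infty}\big)\big(|\Theta(x)|+\|D^2\Theta\|_{L^\infty}\big)\ \le\ C\big(\|D^2\Theta\|_{L^\infty}\big),
\]
using $|\Theta|<\pi$. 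This bound holds uniformly in $x$, independently of the sign of $\Theta(x)$; it is both the reason the final constant may depend only on $\|D^2\Theta\|_{L^\infty}$ and not on $\|D\Theta\|_{L^\infty}$, and the mechanism that accommodates the sign-changing phase in the statement.

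\textbf{Step 2: differential identities on the gradient graph.} On $L=(x,Du(x))$ with $g=I_2+(D^2u)^2$ and $\Delta_g=g^{ij}\partial_{ij}-g^{jp}u_{pq}(\partial_q\Theta)\partial_j$, differentiating the linearized equation \eqref{linearize} and diagonalizing $D^2u$ at the point yields
\[
\Delta_g u_k\overset{x_0}{=}g^{kk}\partial_k\Theta,\qquad \Delta_g|Du|^2\overset{x_0}{=}2\sum_k g^{kk}u_k\partial_k\Theta+2\sum_j g^{jj}\lambda_j^2,\qquad \Delta_g x_k\overset{x_0}{=}-g^{kk}\lambda_k\partial_k\Theta.
\]
Since $|\nabla_g|Du|^2|^2\le 4\big(\sum_j g^{jj}\lambda_j^2\big)|Du|^2$ and $|D\Theta|\le C$ by Step 1, these give $\Delta_g\log|Du|^2\ge -C(\|D^2\Theta\|_{L^\infty})$ wherever $|Du|\ge1$, as well as $|\Delta_g u|\le 1+|D\Theta|\,|Du|$; together with the elementary $|\nabla_g v|^2\le|Dv|^2$ and $\sum g^{ii}\le2$, every error produced below will be $O(1+|Du|)$ with constant depending only on $\|D^2\Theta\|_{L^\infty}$.

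\textbf{Step 3: test function and maximum principle.} Following \cite{BMS}, I would take an auxiliary function of the type
\[
P \;=\; \eta(x)\,\Phi\!\big(u(x)\big)\,\psi\!\big(|Du(x)|^2\big),
\]
where $\eta$ is a spatial cutoff supported in $B_1(0)$, equal to $1$ on $B_{1/2}(0)$ and vanishing at $\partial B_1(0)$ at the rate required by the argument; $\Phi$ is a fixed positive monotone function of the height calibrated to $M$, so that $\Phi$ and $1/\Phi$ stay comparable to $1$ on $[0,M]$ while $\Phi'$ and $\Phi''$ carry the $1/M$ factors; and $\psi$ is increasing. If $P$ attains its maximum at an interior point $y$ with $|Du(y)|$ large (otherwise the estimate is immediate), then applying $\Delta_g$, using $\nabla_gP(y)=0$ to eliminate the cross terms, and inserting the identities of Step 2 leaves an inequality whose only favorable quantities come from the cutoff and from the convexity of $\Phi$ in $u$, while the unfavorable ones are the first-order terms controlled by $\Phi'(u)\,|Du|\,|D\Theta|$ --- which by Step 1 are $O(|Du|/M)$ with constant depending on $\|D^2\Theta\|_{L^\infty}$ --- together with the cutoff-gradient terms. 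Balancing these as in \cite{BMS} bounds $P(y)$, and hence $P(0)$, by $C(\|D^2\Theta\|_{L^\infty})$ times a power of $M(1+M)$; inverting $\psi$ gives $|Du(0)|\le C(\|D^2\Theta\|_{L^\infty})M(1+M)$, and the rescaling of Step 1 reinstates the factor $R$.

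\textbf{Step 4: the main obstacle.} The decisive difficulty is the non-uniform ellipticity of \eqref{s}: in a direction with a large eigenvalue $\lambda_i$ one has $g^{ii}\to0$, so the good curvature term $\sum_j g^{jj}\lambda_j^2$, the good cutoff term, and $|\nabla_g u|^2$ all degenerate while the bad first-order term does not, and $\Delta_g\log|Du|^2$ carries no gain term (it is only bounded below). The test function must therefore be arranged so that the convex height barrier $\Phi(u)$ alone absorbs every unfavorable contribution, uniformly in the ellipticity, and this must be done while keeping the whole estimate scale invariant, so that the constant depends only on $n$ and $\|D^2\Theta\|_{L^\infty(B_R)}$. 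Carrying out this balance is the heart of the matter, and it is exactly where the hypothesis $D\Theta=0$ on $\{\Theta=0\}$ --- via the bounded-gradient reduction of Step 1 --- is indispensable: without it the bad term is an uncontrolled multiple of $|Du|$ rather than $O(|Du|)$ with a $\|D^2\Theta\|_{L^\infty}$-constant.
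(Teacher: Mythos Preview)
Your sketch has the right shape but misses the decisive mechanism, and Step~3 as written will not close. The gap is in how you use the hypothesis $D\Theta=0$ on $\{\Theta=0\}$. In Step~1 you only extract the \emph{uniform} consequence $|D\Theta|\le C(\|D^2\Theta\|_{L^\infty})$ and then discard the pointwise information $|D\Theta(x)|^2\le C\,|\Theta(x)|$. The paper's proof needs precisely that finer bound. Running the \cite{BMS} argument with the additive test function $w=\frac{|Dv|}{1-|x|^2}+Av^2/2$ (not the multiplicative one you propose), one arrives at an intermediate inequality of the form
\[
(1-|x_0|^2)|Dv|\ \le\ C(n)\,M\big(1+(1-|x_0|^2)\,R\,|D\Theta(Rx_0)|\,|\lambda_n|\big),
\]
together with the structural relations $|\Theta|\le 1/|\lambda_n|$ and $(1-|x_0|^2)\sim |Dv|/|\lambda_n|$ at the maximum point. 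The dangerous factor is $|D\Theta|\,|\lambda_n|$. Your uniform bound gives only $|D\Theta|\,|\lambda_n|\le C|\lambda_n|$, which is uncontrolled. What actually works is to feed the pointwise interpolation $|D\Theta|^2\lesssim |\Theta|$ into $|\Theta|\le 1/|\lambda_n|$, so that $|D\Theta|\lesssim |\lambda_n|^{-1/2}$ and the product $|D\Theta|\,|\lambda_n|$ drops to $|\lambda_n|^{1/2}$; combined with $(1-|x_0|^2)\sim |Dv|/|\lambda_n|$ this produces an absorbable term of order $\big((1-|x_0|^2)|Dv|\big)^{1/2}$, from which the claimed $R|Du(0)|\le C(\|D^2\Theta\|_{L^\infty})\,M_R(1+M_R)$ follows.

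Separately, your Step~4 resolution --- that the convex height barrier $\Phi(u)$ ``alone absorbs every unfavorable contribution'' --- fails for the reason you yourself flag: the good term it generates is $\Phi''(u)\,|\nabla_g u|^2=\Phi''(u)\sum_i g^{ii}u_i^2$, and this degenerates exactly when the large component of $Du$ aligns with a large eigenvalue, while the bad first-order term $\Phi'(u)\,|D\Theta|\,|Du|$ carries no compensating $g^{ii}$ factor. So with only the uniform bound on $|D\Theta|$ the multiplicative test function cannot balance. Two further points: the paper's additive test function is even in $v$, which is what permits the reflection $u\mapsto -u$ to handle $\Theta(x_0)<0$; your $\Phi(u)$ is not specified to have this symmetry. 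And the scaling in your Step~1 is not innocuous: under $u\mapsto R^{-2}u(R\cdot)$ the second derivatives of the phase scale by $R^2$, so ``restoring the factor $R$ at the end'' requires tracking $R$ through the argument as the paper does, not just normalizing it away.
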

\begin{proof}
    The proof in \cite{BMS} works with the following modifications:
    \begin{equation*}
        v(x) = \frac{u(Rx)}{R^2} \quad\text{hence}\quad Dv(x) = \frac{Du(Rx)}{R} \text{ and } D^2v(x) = D^2u(Rx)
    \end{equation*}
    for $x\in B_1(0)$. This last equation shows that $v$ satisfies the equation $F(D^2v(x)) = \Theta(Rx)$. As the test function
    \[
    w = \frac{1}{1-|x|^2}|Dv| + Av^2/2
    \]
    is an even function of $v$, we are able to use the reflection $u\to -u$, for the case where $\Theta(Rx_0) < 0$.
    
    We define
    \begin{equation*}
        M_R = \osc_{B_R(0)}u = R^2\osc_{B_1(0)}v = R^2M \text{ for } M=\osc_{B_1(0)}v.
    \end{equation*}
    The rest of the proof remains the same until we reach \cite[(2.11)]{BMS}, where we now get
    \begin{equation}\label{gradv}
        (1-|x_0|^2)|Dv|\leq C(n)M(1 + (1-|x_0|^2)R|D\Theta(Rx_0)\|\lambda_n|)
    \end{equation}
    where an extra factor of $R$ comes from taking the derivative of $\Theta$. If $\Theta(Rx_0) >0$, then $\lambda_n = \lambda_2$ and if $\Theta(Rx_0) < 0$, then $\lambda_n = \lambda_1$. From the assumption on $\Theta$, if $\Theta(Rx_0)> 0$, then we can apply the pointwise interpolation inequality \cite[Lemma 7.7.2]{lhorm1} to $\max\{0,\Theta\}$. If $\Theta(Rx_0) = 0$, then $|D\Theta(Rx_0)| = 0$ and the proof would be done. Overall, we obtain
    \[
    |D\Theta(Rx_0)|^2\leq \frac{\Theta(Rx_0)^2}{1-|x_0|^2} +2\|D^2\Theta\|_{L^\infty(B_R(0))}|\Theta(Rx_0)|.
    \]
    We also recall from \cite{BMS} that
    \begin{equation*}
        |\Theta| \leq \frac{1}{|\lambda_n|} \text{ and } 1-|x_0|^2 \sim \frac{|Dv|}{|\lambda_n|}.
    \end{equation*}
    Substituting these into \eqref{gradv} we get
    \[
    M^{-1}(1-|x_0|^2)|Dv|\lesssim_\Theta 1 + R((1-|x_0|^2)|Du|)^\frac{1}{2}
    \]
    from which we get
    \[
    (1-|x_0|^2)|Dv|\lesssim_\Theta M + R^2M^2.
    \]
    Altogether, we get
    \begin{align*}
        \frac{1}{R}|Du(0)| = |Dv(0)|&\leq (1-|x_0|^2)|Dv(x_0)| + \frac{3\sqrt{n}v^2(x_0)}{2M}\\
        &\leq C(n,\|D^2\Theta\|_{L^\infty(B_R(0))})(M + R^2M^2) + C(n)M\\
        &= C(n,\|D^2\Theta\|_{L^\infty(B_R(0))})(\frac{M_R}{R^2} + \frac{M_R^2}{R^2}) + C(n)\frac{M_R}{R^2}.
    \end{align*}
    Hence,
    \[
    R|Du(0)| \leq C(n,\|D^2\Theta\|_{L^\infty(B_R(0))})(\osc_{B_R(0)}u)(1 + \osc_{B_R(0)}u).
    \]
\end{proof}

\subsection{Volume bound by the gradient}
Here we will show how to bound the volume form. 

\begin{lemma}
    Let $u$ be a smooth solution of \eqref{s} on $B_{2R}(0)\subset\re^2$, where $\Theta \in C^2(B_{2R}(0))$ satisfies $-\pi < \Theta < \pi$. Then
    \begin{equation}\label{volbd}
        \frac{1}{|B_R|}\int_{B_R(0)} Vdx \leq C\left(1+(1+1/R^2)\|Du\|_{L^\infty(B_{2R}(0))}^2 +(1+1/R)\|Du\|_{L^\infty(B_{2R}(0))}\right)
    \end{equation}
    where $C$ depends on the dimension $2$, $\|D\Theta\|_{L^\infty(B_{2R}(0))}$, and $\|D^2\Theta\|_{L^\infty(B_{2R}(0))}$.
\end{lemma}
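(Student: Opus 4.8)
The plan is to write the volume form $V=\sqrt{\det g}$ as an affine function of $\Delta u$ and $\det D^2u$ with coefficients depending only on $\Theta$, and then to integrate it against a cutoff so that every derivative falls on the cutoff or on the phase, never on the uncontrolled Hessian of $u$. The key algebraic input to establish first is the pointwise identity
\[
V \;=\; \cos\Theta\,\bigl(1-\det D^2u\bigr)\;+\;\sin\Theta\,\Delta u .
\]
To see it, set $\sigma_1=\Delta u$ and $\sigma_2=\det D^2u$; then $\det g=(1+\lambda_1^2)(1+\lambda_2^2)=(1-\sigma_2)^2+\sigma_1^2$, while the alternate form of \eqref{tanpde}, namely $\cos\Theta\,\sigma_1-\sin\Theta\,(1-\sigma_2)=0$, forces the vector $(1-\sigma_2,\sigma_1)$ to be a scalar multiple of $(\cos\Theta,\sin\Theta)$. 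Since $\sigma_1=\lambda_1+\lambda_2$ and $\sin\Theta$ carry the same sign as $\Theta$ (for $\Theta>0$ one has $\sigma_1>0$ because the larger eigenvalue dominates in magnitude, symmetrically for $\Theta<0$, and both vanish when $\Theta=0$), that scalar equals exactly $V\ge 0$, so $1-\det D^2u=V\cos\Theta$ and $\Delta u=V\sin\Theta$, which combine to the identity.

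Next I fix $\phi\in C_c^\infty(B_{2R}(0))$ with $\phi\equiv 1$ on $B_R(0)$, $0\le\phi\le 1$, and $|D^k\phi|\le C_k R^{-k}$ for $k=1,2$. Since $V>0$ and $\phi\ge 0$ with $\phi\equiv 1$ on $B_R(0)$, the identity above gives
\begin{align*}
\int_{B_R(0)}V\,dx
&\le \int_{B_{2R}(0)}\phi V\,dx\\
&= \int_{B_{2R}(0)}\phi\cos\Theta\,dx - \int_{B_{2R}(0)}\phi\cos\Theta\,\det D^2u\,dx\\
&\quad + \int_{B_{2R}(0)}\phi\sin\Theta\,\Delta u\,dx .
\end{align*}
The first term is at most $|B_{2R}|$. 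For the third I integrate by parts once, $\int\phi\sin\Theta\,\Delta u=-\int D(\phi\sin\Theta)\cdot Du$, and since $D(\phi\sin\Theta)=\sin\Theta\,D\phi+\phi\cos\Theta\,D\Theta$ this is bounded by $\bigl(CR^{-1}+\|D\Theta\|_{L^\infty(B_{2R})}\bigr)\,|B_{2R}|\,\|Du\|_{L^\infty(B_{2R})}$. For the second I use the classical double integration-by-parts identity for the Hessian determinant in the plane, valid for any $\psi\in C_c^\infty(B_{2R}(0))$ and $u$ smooth on $B_{2R}(0)$,
\[
\int\psi\,\det D^2u\,dx \;=\; -\tfrac12\int\bigl(\psi_{11}u_2^2-2\psi_{12}u_1u_2+\psi_{22}u_1^2\bigr)\,dx ,
\]
applied with $\psi=\phi\cos\Theta$, for which $|D^2\psi|\le C\bigl(R^{-2}+R^{-1}\|D\Theta\|_{L^\infty(B_{2R})}+\|D\Theta\|_{L^\infty(B_{2R})}^2+\|D^2\Theta\|_{L^\infty(B_{2R})}\bigr)$; this bounds the second term by $\|D^2\psi\|_{L^\infty(B_{2R})}\,\|Du\|_{L^\infty(B_{2R})}^2\,|B_{2R}|$.

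Dividing by $|B_R|$, using $|B_{2R}|/|B_R|=4$, and absorbing the cross terms by Young's inequality ($R^{-1}\|D\Theta\|_{L^\infty(B_{2R})}\le\tfrac12R^{-2}+\tfrac12\|D\Theta\|_{L^\infty(B_{2R})}^2$) gives \eqref{volbd}, with $C$ depending only on the dimension, $\|D\Theta\|_{L^\infty(B_{2R})}$, and $\|D^2\Theta\|_{L^\infty(B_{2R})}$; note that no hypothesis on the zero set of $\Theta$ enters. The one substantive step is the pointwise identity: it converts the nonlinear, non-divergence quantity $\sqrt{\det g}$ into something affine in $(\Delta u,\det D^2u)$, to which divergence-structure integration by parts applies. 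The cutoff is essential --- one cannot instead integrate $\det D^2u$ directly over $B_R(0)$ and integrate by parts there, since that would leave boundary terms on $\partial B_R(0)$ involving $D^2u$, precisely the quantity being bounded; this is also why the argument yields only the one-sided estimate $\int_{B_R(0)}V\le\int_{B_{2R}(0)}\phi V$ rather than an equality.
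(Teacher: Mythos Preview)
Your proof is correct, and in fact cleaner than the paper's. The paper proves the same volume bound but does not write down the global pointwise identity $V=\cos\Theta\,(1-\sigma_2)+\sin\Theta\,\sigma_1$; instead it observes only the two separate relations $V=|\sec\Theta|\,|1-\sigma_2|$ and $V=|\csc\Theta|\,|\sigma_1|$, each of which degenerates on part of the range $(-\pi,\pi)$. To compensate, the paper introduces a partition of unity $\rho_1,\ldots,\rho_5$ in the $\Theta$-variable, using the $\sec\Theta$ form where $|\Theta|$ is near $0$ or $\pi$ and the $\csc\Theta$ form where $|\Theta|$ is near $\pi/2$, and then carries the extra cutoff $\rho_j(\Theta(x))$ through the same integration-by-parts identities for $\Delta u$ and $\det D^2u$ that you use. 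Your single identity (equivalently, $(1+i\lambda_1)(1+i\lambda_2)=Ve^{i\Theta}$) sidesteps this decomposition entirely and yields the estimate with one spatial cutoff $\phi$ and no $\Theta$-cutoffs; it buys a shorter argument with fewer terms to track, while the paper's approach buys nothing additional here, though its $|\csc\Theta|$/$|\sec\Theta|$ splitting is in the spirit of the case analysis used elsewhere in the paper. The subsequent integration-by-parts for $\sigma_1$ and $\sigma_2$ is identical in both proofs.
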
 

\begin{proof}
We first note that the volume element can be written as
\begin{align*}
    V^2 &= (1 + \lambda_1^2)(1 + \lambda_2^2)\\
    &= 1 + \lambda_1^2 + \lambda_2^2 + \sigma_2^2\\
    &=1 + \sigma_1^2 - 2\sigma_2 + \sigma_2^2\\
    &= \sigma_1^2 + (1-\sigma_2)^2.
\end{align*}
Using the alternate form of the equation \eqref{tanpde}, it follows that
\begin{align}
    V &= \sec|\Theta\|(1-\sigma_2)|\label{Vsec}\\
    V &= \csc|\Theta\|\sigma_1|\label{Vcsc}
\end{align}
where the sign of the above pieces depends on the range and sign of $\Theta$, which dictates the signs of $\sigma_1$ and $\sigma_2$.

We will use the product of two cutoff functions over different regions. Let $0\leq\chi\leq 1$ be a smooth cutoff function supported in $B_2(0)$ where $\chi = 1$ on $B_1(0)$, $|D\chi| < 1$ and $|D^2\chi|< 2$. We will define the following five cutoff functions $\rho_i$ over the different ranges of $\Theta$:
\begin{align*}
    0\leq\rho_1\leq 1 &\text{ supported on }  (-9\pi/8, -5\pi/8) \text{ where } \rho_1 = 1 \text{ on } (-\pi, -3\pi/4)\\
    0\leq\rho_2\leq 1 &\text{ supported on } (-7\pi/8, -\pi/8) \text{ where } \rho_2 = 1 \text{ on } (-3\pi/4, -\pi/4)\\
    0\leq\rho_3\leq 1 &\text{ supported on } (-3\pi/8, 3\pi/8) \text{ where } \rho_3 = 1 \text{ on } (-\pi/4, \pi/4)\\
    0\leq\rho_4\leq 1 &\text{ supported on } (\pi/8, 7\pi/8) \text{ where } \rho_4 = 1 \text{ on } (\pi/4, 3\pi/4)\\
    0\leq\rho_5\leq 1 &\text{ supported on } (5\pi/8,9\pi/8) \text{ where } \rho_5 = 1 \text{ on } (3\pi/4,\pi)
\end{align*}
where each $\rho_j$ satisfies $|\rho_j'|\leq \frac{8}{\pi}, |\rho_j''|\leq \frac{64}{\pi^2}$.

We observe that
\[
\int_{B_R(0)}Vdx \leq \sum_{j=1}^5\int_{B_{R}(0)}\chi(x/R)\rho_j(\Theta(x))Vdx
\]
since $\chi(x/R) = 1$ on $B_R(0)$ and $1\leq \sum_{j=1}^5\rho_j$ on $\Theta(B_R(0))$.
Looking at each component, we see
\begin{align}
    &\int_{B_{R}(0)}\chi(x/R)\rho_1(\Theta(x))|\sec\Theta(1-\sigma_2)| dx \leq C_1\int_{B_{2R}(0)}\chi(x/R)\rho_1(\Theta(x))(\sigma_2 - 1)dx \nonumber\\
    &\int_{B_{R}(0)}\chi(x/R)\rho_2(\Theta(x))|\csc\Theta\sigma_1 |dx \leq C_1\int_{B_{2R}(0)}\chi(x/R)\rho_2(\Theta(x))(-\sigma_1) dx\nonumber\\
    &\int_{B_{R}(0)}\chi(x/R)\rho_3(\Theta(x))|\sec\Theta(1-\sigma_2)| dx \leq C_1\int_{B_{2R}(0)}\chi(x/R)\rho_3(\Theta(x))(1-\sigma_2) dx \label{sig2}\\
    &\int_{B_{R}(0)}\chi(x/R)\rho_4(\Theta(x))|\csc\Theta\sigma_1| dx \leq C_1\int_{B_{2R}(0)}\chi(x/R)\rho_4(\Theta(x))\sigma_1 dx\label{sig1}\\
    &\int_{B_{R}(0)}\chi(x/R)\rho_5(\Theta(x))|\sec\Theta(1-\sigma_2)| dx \leq C_1\int_{B_{2R}(0)}\chi(x/R)\rho_5(\Theta(x))(\sigma_2-1) dx\nonumber
\end{align}
where we first use the cutoffs $\rho_j$, and either \eqref{Vsec} or \eqref{Vcsc}, as well as the bounds on 
\[
|\sec\Theta|,|\csc\Theta| \leq \frac{2}{\sqrt{2-\sqrt{2}}} =: C_1
\]
in their respective regions on the supports of $\rho_j$. Then we increase the radius to $B_{2R}(0)$ using the $\chi$ cutoff and the fact that all of the integrands are nonnegative. We will show how to bound the integrals in \eqref{sig2} and \eqref{sig1} using integration by parts, as the other three integrals are similar.

The term with $\sigma_1$ is bounded by
\begin{align*}
    \int_{B_{2R}(0)}\chi(x/R)&\rho_4(\Theta(x))\sigma_1 dx = \int_{B_{2R}(0)}\chi(x/R)\rho_4(\Theta(x))\text{div}Du \;dx\nonumber\\
    &= -\frac{1}{R}\int_{B_{2R}(0)}\rho_4(\Theta(x))\langle D\chi, Du\rangle dx  \nonumber\\
    &\quad -\int_{B_{2R}(0)}\chi(x/R)\rho_4'(\Theta)\langle D\Theta,Du\rangle dx \nonumber\\
    &\leq 4|B_{R}|\left(\frac{1}{R} + \frac{8}{\pi}\|D\Theta\|_{L^\infty(B_{2R}(0))}\right)\|Du\|_{L^\infty(B_{2R}(0))}.
\end{align*}

To bound the integral with $\sigma_2$, we can use the inductive argument from \cite{WaY} and the divergence structure of $\sigma_k(D^2u)$:
\begin{align*}
    k\sigma_k(D^2u) &= \sum_{i,j=1}^2\frac{\pd \sigma_k}{\pd u_{ij}}\frac{\pd^2u}{\pd x_i\pd x_j} = \sum_{i,j=1}^2\frac{\pd}{\pd x_i}\left(\frac{\pd \sigma_k}{\pd u_{ij}}\frac{\pd u}{\pd x_j}\right)\\
    &= \text{div}(L_{\sigma_k}Du),
\end{align*}
where $L_{\sigma_k}$ is the matrix given by $\displaystyle\left(\frac{\pd\sigma_k}{\pd u_{ij}}\right)$. We get
\begin{align*}
    \int_{B_{2R}(0)}\chi(x/R)&\rho_3(\Theta(x))(-\sigma_2)dx =-\frac{1}{2} \int_{B_{2R}(0)}\chi(x/R)\rho_3(\Theta(x))\text{ div}(L_{\sigma_2}Du)dx\nonumber\\
    &= \frac{1}{2}\int_{B_{2R}(0)}\langle D(\chi(x/R)\rho_3(\Theta(x))),L_{\sigma_2}Du\rangle dx\nonumber\\
    &= \frac{1}{2}\int_{B_{2R}(0)}\pd_1\big[\chi(x/R)\rho_3(\Theta(x))\big]\left[\pd_2(u_1u_2) - \pd_1(u_2^2)\right] dx\nonumber\\
    &\;+ \frac{1}{2}\int_{B_{2R}(0)}\pd_2\big[\chi(x/R)\rho_3(\Theta(x))\big]\left[\pd_1(u_1u_2) -\pd_2(u_1^2)\right]dx\nonumber\\
    &= \frac{1}{2}\int_{B_{2R}(0)} \pd_1^2\big[\chi(x/R)\rho_3(\Theta(x))\big]u_2^2\;dx\nonumber\\
    &+\frac{1}{2}\int_{B_{2R}(0)} \pd_2^2\big[\chi(x/R)\rho_3(\Theta(x))\big]u_1^2\;dx\nonumber\\
    &-\frac{1}{2}\int_{B_{2R}(0)} 2\pd_{12}^2\big[\chi(x/R)\rho_3(\Theta(x))\big]u_1u_2\;dx\nonumber\\
    &\leq 4|B_R|\left(\frac{2}{R^2} + \frac{48}{\pi^2}\|D\Theta\|_{L^\infty(B_{2R}(0))}^2 + \frac{4}{\pi}\|D^2\Theta\|_{L^\infty(B_{2R}(0))}\right)\|Du\|_{L^\infty(B_{2R}(0))}^2
\end{align*}
where the third line uses the divergence structure of $L_{\sigma_2}Du$ and integration by parts. To see this, observe that
\[
\sigma_2 = \det\begin{bmatrix}
    u_{11} & u_{12}\\
    u_{21} & u_{22}
\end{bmatrix} = u_{11}u_{22}-u_{12}u_{21}
\]
from which we get
\[
L_{\sigma_2} = \begin{bmatrix}
    u_{22} & -u_{21}\\
    -u_{12} & u_{11}
\end{bmatrix}.
\]
Altogether this becomes
\begin{align*}
    L_{\sigma_2}Du &= (u_{22}u_{1} - u_{21}u_2, -u_{12}u_1 + u_{11}u_2)\\
    &= ( \pd_2(u_2u_1) - 2u_2u_{21}, \pd_1(u_1u_2) - 2u_{12}u_1)\\
    &= (\pd_2(u_2u_1) - \pd_1(u_2^2),\pd_1(u_1u_2) - \pd_2(u_1^2)).
\end{align*}
The bound on the volume element is then
\begin{equation*}
    \frac{1}{|B_R|}\int_{B_R(0)}Vdx \leq 12C_1\left(1+C_2\|Du\|_{L^\infty(B_{2R}(0))}^2 + C_3\|Du\|_{L^\infty(B_{2R}(0))}\right).
\end{equation*}
where
\[
C_2 = \frac{2}{R^2} + \frac{48}{\pi^2}\|D\Theta\|^2_{L^\infty(B_{2R}(0))} + \frac{4}{\pi}\|D^2\Theta\|_{L^\infty(B_{2R}(0))}\quad\text{and} \quad C_3 = \frac{1}{R} + \frac{8}{\pi}\|D\Theta\|_{L^\infty(B_{2R}(0))}.
\]
\end{proof}

\begin{enumerate}
    \item[Proof of \eqref{loclip}] Each $u_k$ satisfies \eqref{gradest}. This is similar to \cite[Theorem 3.3]{TW2}. Using the norm definitions \cite[(2.11)]{TW2} and interpolation \cite[Lemma 2.6]{TW2}, we can achieve the estimate similar to \cite[Corollary 3.4]{TW2} which is exactly \eqref{loclip}. This holds for all $u_k$. The right-hand side can then be bounded by
    \[
    \int_{\Omega}|u_k|dx \leq \int_{\Omega}|u_k - u|dx +\int_{\Omega}|u|dx.
    \]
    As $u_k\to u$, we get \eqref{loclip} holds for $u$ as well. \hfill\qedsymbol

    \item[Proof of \eqref{weakD1}] We now prove that $u$ is weakly differentiable almost everywhere. We will prove that the weak derivatives will be given by some function $f_i = \lim_{k\to \infty} \frac{\pd u_k}{\pd x_i}$ using the gradient estimate \eqref{gradest}. Using integration by parts, we have that for any $\phi\in C_c^\infty(B_R(0))$
    \begin{equation}\label{intbyparts1}
        \int_{B_R(0)} u_k \frac{\pd \phi}{\pd x_i}dx = -\int_{B_R(0)}\phi \frac{\pd u_k}{\pd x_i}dx.
    \end{equation}
    Because $u_k \to u$ uniformly, we get that the left hand side of \eqref{intbyparts1} becomes
    \begin{equation}\label{weakd1lhs}
        \lim_{k\to \infty}\int_{B_R(0)} u_k\frac{\pd \phi}{\pd x_i}dx = \int_{B_R(0)}\lim_{k\to \infty}u_k\frac{\pd \phi}{\pd x_i}dx = \int_{B_R(0)}u\frac{\pd \phi}{\pd x_i}dx.
    \end{equation}
    
    To bound the right-hand side, we use the gradient estimate:
    \[
    \left|\phi \frac{\pd u_k}{\pd x_i} \right|\leq |\phi\|Du_k|\leq |\phi|C(n,K_2)(\osc_{B_R(0)}u_k)(1 + \osc_{B_R(0)}u_k).
    \]
    By the variant of the Dominated Convergence Theorem \cite[Chapter 1.3, Theorem 4]{EG}, we get that
    \begin{equation}\label{weakd1rhs}
        \lim_{k\to \infty}\int_{B_R(0)}\phi \frac{\pd u_k}{\pd x_i}dx = \int_{B_R(0)}\phi \lim_{k\to \infty}\frac{\pd u_k}{\pd x_i} dx = \int_{B_R(0)}\phi f_i dx.
    \end{equation}
    By combining \eqref{weakd1lhs} and \eqref{weakd1rhs}, we get exactly
    \[
    \int_{B_R(0)}u\frac{\pd \phi}{\pd x_i}dx = \int_{B_R(0)}\phi f_i dx
    \]
    as desired. \hfill\qedsymbol
    
    \item[Proof of \eqref{weakD2}/\eqref{lebesguesing}] We now prove that $u$ is weakly second differentiable almost everywhere. Similar to before, this will come from proving that the weak second derivatives are given by $f_{ij} = \lim_{k\to \infty}\frac{\pd^2 u_k}{\pd x_i\pd x_j}$.
    Integrating by parts we see that for any $\phi\in C_c^\infty(B_R(0))$,
    \begin{equation}\label{intbyparts2}
        \int_{B_R(0)}u_k\frac{\pd^2\phi}{\pd x_i\pd x_j}dx = \int_{B_R(0)}\phi \frac{\pd^2 u_k}{\pd x_i\pd x_j}dx.
    \end{equation}
    Again since $u_k \to u$ uniformly, we see that the left hand side of \eqref{intbyparts2} becomes
    \begin{equation}\label{weakd2lhs}
        \lim_{k\to \infty}\int_{B_R(0)} u_k\frac{\pd^2\phi}{\pd x_i\pd x_j}dx = \int_{B_R(0)}\lim_{k\to \infty}u_k\frac{\pd^2\phi}{\pd x_i\pd x_j}dx = \int_{B_R(0)}u\frac{\pd^2\phi}{\pd x_i\pd x_j}dx.
    \end{equation}
    The right-hand side is trickier since we don't have a point-wise bound on the second derivatives. However, we do have the following pointwise
    \[
    \left|\phi \frac{\pd^2 u_k}{\pd x_i\pd x_j}\right|\leq |\phi|\sqrt{\det(I_2 + (D^2u_k)^2)} := |\phi|V_k.
    \]
    Furthermore, using the volume bound \eqref{volbd}, Young's inequality, and the fact that $u_k\to u$ uniformly, we get
    \begin{align*}
        \int_{B_R(0)} V_kdx &\leq C(R,\|D\Theta_k\|_{L^\infty(B_{2R(0)})},\|D^2\Theta_k\|_{L^\infty(B_{2R(0)})})(\|Du_k\|^2_{L^\infty(B_{2R(0)})} + \|Du_k\|_{L^\infty(B_{2R(0)})} + 1)\\
        &\leq C(R,K_2)((\osc_{B_{2R(0)}}u_k)^4 + 1))\\
        &\leq C(R,K_2)( (\osc_{B_{2R(0)}}(u_k - u))^4 + (\osc_{B_{2R(0)}}u)^4 + 1)\\
        &\leq C(R,K_2)( \max_{k\leq M}\{(\osc_{B_{2R(0)}}(u_k - u))^4\} + (\osc_{B_{2R(0)}}u)^4 + 2)
    \end{align*}
    where the last line is independent of $k$ as we choose $M$ large enough so that $\osc_{B_R(0)}(u_k - u)\leq 1$ for all $k\geq M$. Using \cite[Theorem 4.5.6]{BogachevMT}, we have that there exists some $V$ such that
    \[
    \lim_{k\to\infty}\int_{B_R(0)}|\phi|V_kdx = \int_{B_R(0)}|\phi|Vdx
    \]
    and hence, by the variant of the Dominated Convergence Theorem \cite[Chapter 1.3, Theorem 4]{EG}, we get 
    \begin{equation}\label{weakd2rhs}
        \lim_{k\to \infty}\int_{B_R(0)}\phi \frac{\pd^2 u_k}{\pd x_i\pd x_j}dx = \int_{B_R(0)}\phi\lim_{k\to \infty} \frac{\pd^2 u_k}{\pd x_i\pd x_j}dx = \int_{B_R(0)}\phi f_{ij}dx.
    \end{equation}
    By combining \eqref{weakd2lhs} and \eqref{weakd2rhs} we get
    \[
    \int_{B_R(0)}u\frac{\pd^2\phi}{\pd x_i\pd x_j}dx = \int_{B_R(0)}\phi f_{ij}dx
    \]
    as desired.\hfill\qedsymbol
\end{enumerate}

\section{Proof of the Main Theorem}\label{sec_main}
\begin{proof} Our proof follows in four steps.

\begin{enumerate}
    \item[Step 1.] We claim that $|D^2u(0)|$ is bounded above by $\|u\|_{C^1(B_R(0))}$. Otherwise, there exists a sequence of smooth solutions to \eqref{s} with variable $\Theta_k \in C^{1,1}$ on $B_R(0)$ such that $\|u_k\|_{C^1(B_{r_1}(0))}\leq K_1$ for $r_1< R$, but $|D^2u_k(0)|\to\infty$. We note that the gradient and Hessian of $\Theta_k$ are uniformly bounded above by some constant $K_2$. By Arzel\`a-Ascoli, we can extract a subsequence, still denoted $u_k$, which converges uniformly over a smaller ball $B_{r_1}(0)$ and $\Theta_k \to \Theta$ in $C^{1,\alpha}$ for $0<\alpha < 1$. By the closedness of viscosity solutions \cite{CC}, $u_k$ converges uniformly to a continuous viscosity solution, which we denote by $u$. 
    \item[Step 2.] By the Alexandrov-type Theorem \ref{alexandrov}, we have that $u$ is second differentiable almost everywhere on $B_{r_1}(0)$. Fix such a point $y\in B_{r_2}(0)$ where $r_2< r_1/2$ and let $Q(x)$ be the quadratic such that $\sup_{x\in B_{r_2}(y)}|u(x)-Q(x)| = o(r_2^2)$.
    \item[Step 3.] We now apply Fan's \cite{ZF} extension of Savin's \cite{Savin} perturbation theorem  to $v_k = u_k - Q$. We rescale the functions $v_k$ to 
    \[
    \bar{v}_k(\bar{x}) = \frac{1}{r_2^2}v_k(r_2\bar{x} + y) \quad\text{ for } \bar{x}\in B_1(0).
    \]
    It follows that
    \begin{align}
        \|\bar{v}_k\|_{L^\infty(B_1(0))} &\leq \frac{\|u_k(r_2\bar{x} + y)-u(r_2\bar{x} + y)\|_{L^\infty(B_1(0))}}{r_2^2} + \frac{\|u(r_2\bar{x}+y) - Q(r_2\bar{x} + y)\|_{L^\infty(B_1(0))}}{r_2^2}\nonumber\\
        &= \frac{\|u_k-u\|_{L^\infty(B_{r_2}(y))}}{r_2^2} + \frac{\|u - Q\|_{L^\infty(B_{r_2}(y))}}{r_2^2}\nonumber\\
        &\leq \frac{o(k)}{r_2^2 k} + \frac{o(r_2^2)}{r_2^2}\label{savinC1}.
    \end{align}
    We also have that $\bar{v}_k$ solves 
    \begin{equation*}
        G(D^2\bar{w}(\bar{x})) = \sum_{i=1}^2\left[\arctan\lambda_i(D^2\bar{w} + D^2Q) - \arctan\lambda_i(D^2Q) \right] = f_k(\bar{x})
    \end{equation*}
    where
    \[
    f_k(\bar{x}) = \Theta_k(r_2\bar{x} + y) - \sum_{i=1}^2\arctan\lambda_i(D^2Q).
    \]
    We note that as $Q$ is a quadratic, $D^2Q$ is a constant matrix. We then see that $G$ satisfies the hypotheses of \cite[Theorem 1.7]{ZF}, namely, 1) $G$ is elliptic and furthermore, uniformly elliptic near the origin, 2) $G(0) = 0$, and 3) $G$ is $C^2$ near the origin. 

    We show that for $k$ large enough and $r_2$ small enough,
    \begin{equation*}
        \|\bar{v}_k\|_{L^\infty(B_1(0))}\leq c_1 \quad \text{and}\quad \|f_k\|_{C^{0,\alpha}(B_1(0))} \leq c_1
    \end{equation*}
    for some $c_1$ depending on $n,\alpha,Q$.
    We first choose $r_2$ small enough and $k$ large enough so that \eqref{savinC1} is smaller than $c_1$.  We then see that
    \begin{align}
        \|f_k\|_{C^{0,\alpha}(B_1(0))} &= \sup_{\bar{x},\bar{x}'\in B_1(0):\bar{x}\neq \bar{x}'}\frac{|f(\bar{x}) - f(\bar{x}')|}{|\bar{x}-\bar{x}'|^\alpha} \nonumber\\
        &= \sup_{\bar{x},\bar{x}'\in B_1(0):\bar{x}\neq \bar{x}'}\frac{|\Theta_k(r_2\bar{x}+y) - \Theta_k(r_2\bar{x}'+y)|}{|\bar{x}-\bar{x}'|^\alpha}\nonumber\\
        &= \sup_{x,x'\in B_{r_2}(y):x\neq x'}\frac{|\Theta_k(x) - \Theta_k(x')|}{|x-x'|^\alpha}r_2^\alpha\nonumber\\
        &\leq \|\Theta_k\|_{C^{0,\alpha}(B_{r_1}(0))} r_2^{\alpha}\nonumber\\
        &\leq (\|\Theta_k -\Theta\|_{C^{0,\alpha}(B_{r_1}(0))} + \|\Theta\|_{C^{0,\alpha}(B_{r_1}(0))})r_2^{\alpha}\nonumber\\
        &\leq (1 + \|\Theta\|_{C^{0,\alpha}(B_{r_1}(0))})r_2^{\alpha}\label{falpha}
    \end{align}
    where in the last line we choose $k$ large enough that $\|\Theta_k -\Theta\|_{C^{0,\alpha}(B_{r_1}(0))}<1$. Now, we also choose $r_2$ small enough so that \eqref{falpha} is also smaller than $c_1$.
    
    We then get $\bar{v}_k\in C^{2,\alpha}(B_{1/2}(0))$ with
    \begin{equation*}
        \|\bar{v}_k\|_{C^{2,\alpha}(B_{1/2}(0))} \leq C
    \end{equation*}
    which is equivalent to
    \begin{equation}\label{c2al}
        \|u_k\|_{C^{2,\alpha}(B_{r_2/2}(y))}\leq C(n,Q,\alpha,r_2).
    \end{equation}
    
    \item[Step 4.] We now apply the doubling inequality in Lemma \ref{doubling} to \eqref{c2al} and see that
    \begin{align*}
        |D^2u_k(0)|&\leq \sup_{B_{r_2}(y)}|D^2u_k|\\
        &\leq C(r_2,n,\|u_k\|_{C^{1}(B_{r_1}(0))},\|D\Theta_k\|_{L^\infty(B_{r_1}(0))},\|D^2\Theta_k\|_{L^\infty(B_{r_1}(0))},\sup_{B_{r_2/2}(y)}|D^2u|)\\
        &\leq C(r_2,n,K_1,K_2,Q,\alpha)
    \end{align*}
    which contradicts the blowup assumption. This concludes the proof.
\end{enumerate}
\end{proof}

\section*{Appendix} \label{sec_appendix}
The main idea behind proving \eqref{lambda2} and \eqref{lambda1} is the following lemma.
\begin{lemma}\label{MVT}
    Let $f:[x_0,x_1] \to \re$ be continuous and differentiable on $(x_0,x_1)$. If $f(x_0) = 0 $ and $f'(x) \geq 0$ for all $x_0< x < x_1$, then $f(x) \geq f(x_0)$ for all $x_0\leq x < x_1$.
\end{lemma}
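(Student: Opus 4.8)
The plan is to apply the Mean Value Theorem on each subinterval anchored at the left endpoint. I would fix an arbitrary $x$ with $x_0 < x < x_1$. Since $f$ is continuous on $[x_0,x]$ and differentiable on $(x_0,x)$, the Mean Value Theorem furnishes a point $c = c(x) \in (x_0,x)$ with
\[
f(x) - f(x_0) = f'(c)\,(x-x_0).
\]
By hypothesis $f'(c) \geq 0$, and $x - x_0 > 0$, so the right-hand side is nonnegative; hence $f(x) \geq f(x_0)$. For the remaining value $x = x_0$ the inequality reads $f(x_0) \geq f(x_0)$, which holds trivially. Combining these cases yields $f(x) \geq f(x_0)$ for every $x_0 \leq x < x_1$, which is the assertion.

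The only point worth flagging is that the Mean Value Theorem is invoked for the restriction of $f$ to the closed interval $[x_0,x]$ for each fixed $x < x_1$; this is legitimate precisely because $f$ is assumed continuous up to and including the left endpoint $x_0$, and differentiable on the open interval, which contains $(x_0,x)$. The hypothesis $f(x_0)=0$ is not needed for the comparison $f(x)\geq f(x_0)$ itself — it merely pins the base value to zero, so that in the applications \eqref{lambda2} and \eqref{lambda1} the conclusion $f(x)\geq 0$ is read off directly. An alternative route avoiding the Mean Value Theorem would be to suppose $f(x_*) < f(x_0)$ for some $x_*\in(x_0,x_1)$, locate an interior minimizer of $f$ on $[x_0,x_*]$ by continuity, and contradict $f'\geq 0$ there via left difference quotients; but the Mean Value Theorem version is the cleanest.

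There is no genuine obstacle: the statement is simply the standard fact that a function with nonnegative derivative is nondecreasing, isolated in the precise one-sided form convenient for the arctangent estimates in the Appendix. Accordingly, the proof I would record is the short Mean Value Theorem argument above.
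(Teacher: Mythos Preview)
Your proof is correct and is precisely the argument the paper has in mind: the paper's proof consists of the single sentence ``This follows directly from the Mean Value Theorem,'' and you have simply written out that direct application. Your observation that the hypothesis $f(x_0)=0$ is not actually used for the inequality $f(x)\geq f(x_0)$ is also accurate.
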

\begin{proof}
    This follows directly from the Mean Value Theorem. 
\end{proof}

\subsection{Proof of Equation \eqref{lambda2}}
    In this case, we are taking $x = 1/|\lambda_2|$, so we will consider $0\leq x \leq 1$. We take
    \[
    f(x) = \arctan(x) - \arctan(1)x.
    \]
    Observe that $f(0) = 0$ and $f(1)=0$. We observe that $f'(x) > 0$ for $0<x<\sqrt{\frac{4}{\pi}-1}$ and $f'(x) < 0$ for $\sqrt{\frac{4}{\pi}-1} < x < 1$. These two pieces of information show that $f$ is positive for all $0<x<1$ and has a positive maximum at $x =\sqrt{\frac{4}{\pi}-1}$.
\subsection{Proof of Equation \eqref{lambda1}}
    We similarly take $y =1/|\lambda_1|$, and thus $0\leq y \leq 1$. Let $0 < p \leq 1$. We show that
    \[
    f(y) = \frac{Cy^p}{(1+y^2)^\frac{p}{2}}-\arctan(y)
    \]
    satisfies Lemma \ref{MVT} for some $C(p)$ to be chosen. We see that $f(0) = 0$ and choose $C$ so that
    \[
    f(1) = \frac{C}{2^\frac{p}{2}}-\frac{\pi}{4} > 0 \quad \text{i.e.}\quad C > \frac{2^\frac{p}{2}\pi}{4}.
    \]
    We then calculate
    \begin{align*}
        f'(y) &= \frac{pCy^{p-1}}{(1+y^2)^\frac{p}{2}} - \frac{pCy^{p+1}}{(1+y^2)^{\frac{p}{2}+1}} - \frac{1}{1+y^2}\\
        &= \frac{pCy^p}{(1+y^2)^\frac{p}{2}}\left(\frac{1}{y}-\frac{y}{1+y^2}\right) - \frac{1}{1+y^2}\\
        &= \frac{pCy^p}{(1+y^2)^\frac{p}{2}}\frac{1}{y(1+y^2)} - \frac{1}{1+y^2}\\
        &=\frac{1}{1+y^2}\left(\frac{pC}{y^{1-p}(1+y^2)^\frac{p}{2}} - 1 \right).
    \end{align*}
    The function 
    \[
    g(y) = \frac{pC}{y^{1-p}(1+y^2)^\frac{p}{2}}
    \]
    is strictly decreasing for $y>0$. To show $f'(y)>0$, it suffices to show that $g(1) > 1$ and we see that
    \[
    1 <  \frac{pC}{2^\frac{p}{2}} =g(1) \quad\text{requires}\quad C > \frac{2^\frac{p}{2}}{p}.
    \]
    Combining this with the above condition on $C$, we can take
    \[
    C = \frac{2^\frac{p}{2}\pi}{p}.
    \]
    Lastly, we get that
    \begin{align*}
        \arctan(\lambda_1) &< \frac{C}{(1+\lambda_1^2)^\frac{p}{2}} \leq \frac{C}{(1 +\lambda_2^2)^\frac{p}{4}(1+\lambda_1^2)^\frac{p}{4}}\\
        &= \frac{C}{(\sqrt{(1+\lambda_1^2)(1+\lambda_2^2)})^\frac{p}{8}}\\
        &\leq \frac{C}{\log(\sqrt{(1+\lambda_1^2)(1+\lambda_2^2)})} = \frac{C}{b}\\
        &\leq \frac{C}{\bar{b}}\leq \frac{C}{\rho^2\bar{b}^{1-q}}.
    \end{align*}

\bibliographystyle{amsalpha}
\bibliography{version2}

\providecommand{\bysame}{\leavevmode\hbox to3em{\hrulefill}\thinspace}
\providecommand{\MR}{\relax\ifhmode\unskip\space\fi MR }
% \MRhref is called by the amsart/book/proc definition of \MR.
\providecommand{\MRhref}[2]{%
  \href{http://www.ams.org/mathscinet-getitem?mr=#1}{#2}
}
\providecommand{\href}[2]{#2}
\begin{thebibliography}{BSWY25}

\bibitem[Bha21]{AB}
Arunima Bhattacharya, \emph{Hessian estimates for {L}agrangian mean curvature equation}, Calculus of Variations and Partial Differential Equations \textbf{60} (2021), no.~6, 1--23.

\bibitem[Bha22]{AB2d}
\bysame, \emph{A note on the two-dimensional {L}agrangian mean curvature equation}, Pacific Journal of Mathematics \textbf{318} (2022), no.~1, 43--50.

\bibitem[Bha24]{AB1}
\bysame, \emph{The {D}irichlet problem for {L}agrangian mean curvature equation}, Analysis and PDE \textbf{17} (2024), no.~8.

\bibitem[BMS22]{BMS}
Arunima Bhattacharya, Connor Mooney, and Ravi Shankar, \emph{Gradient estimates for the {L}agrangian mean curvature equation with critical and supercritical phase}, American Journal of Mathematics, to appear, arXiv preprint arXiv:2205.13096 (2022).

\bibitem[Bog07]{BogachevMT}
Vladimir~I. Bogachev, \emph{Measure theory}, vol.~1, Springer, 2007.

\bibitem[BSWY25]{BSWY25}
Arunima Bhattacharya, Ravi Shankar, Jeremy Wall, and Diego Yepez, \emph{Singularities of the lagrangian mean curvature flow at the critical lagrangian phase}, arXiv preprint (2025).

\bibitem[BW24]{BW}
Arunima Bhattacharya and Jeremy Wall, \emph{Hessian estimates for the {L}agrangian mean curvature flow}, Calculus of Variations and PDE \textbf{63} (2024), no.~201.

\bibitem[CC95]{CC}
Luis~A. Caffarelli and Xavier Cabr{\'e}, \emph{Fully nonlinear elliptic equations}, vol.~43, American Mathematical Soc., 1995.

\bibitem[CPW17]{CPX}
Tristan~C Collins, Sebastien Picard, and Xuan Wu, \emph{Concavity of the {L}agrangian phase operator and applications}, Calculus of Variations and Partial Differential Equations \textbf{56} (2017), no.~4, 89.

\bibitem[CT05]{CT}
Nirmalendu Chaudhuri and Niel~S. Trudinger, \emph{An {A}lexandrov type theorem for k-convex functions}, Bull. Austral. Math. Soc. \textbf{71} (2005), no.~2, 305--314.

\bibitem[CW19]{CW2}
Jingyi Chen and Micah Warren, \emph{On the regularity of {H}amiltonian stationary {L}agrangian submanifolds}, Advances in Mathematics \textbf{343} (2019), 316--352.

\bibitem[Din24]{Ding1}
Qi~Ding, \emph{Hessian estimates for {L}agrangian mean curvature equation with {L}ipschitz critical and supercritical phases}, arXiv preprint arXiv:2411.07511 (2024).

\bibitem[EG92]{EG}
Lawrence~C. Evans and Ronald~F. Gariepy, \emph{Measure theory and fine properties of functions}, Studies in Advanced Mathematics, CRC Press, 1992.

\bibitem[Fan25]{ZF}
Zhenyu Fan, \emph{A generalization of {S}avin's small perturbation theorem for fully nonlinear elliptic equations and applications}, https://arxiv.org/abs/2509.01138 (2025).

\bibitem[HL82]{HL}
Reese Harvey and H.~Blaine Lawson, \emph{Calibrated geometries}, Acta Math. \textbf{148} (1982), 47--157.

\bibitem[H{\"o}r83]{lhorm1}
Lars H{\"o}rmander, \emph{The {A}nalysis of {L}inear {P}artial {D}ifferential {O}perators {I}}, Springer-Verlag, Berlin, 1983.

\bibitem[Li19]{Lcomp}
Caiyan Li, \emph{A compactness approach to {H}essian estimates for special {L}agrangian equations with supercritical phase}, Nonlinear Analysis \textbf{187} (2019), 434--437.

\bibitem[Lu23]{Siyuan}
Siyuan Lu, \emph{On the {D}irichlet problem for {L}agrangian phase equation with critical and supercritical phase}, Discrete and Continuous Dynamical Systems \textbf{43} (2023), no.~7, 2561--2575.

\bibitem[MS23]{MooneySavin}
Connor Mooney and Ovidiu Savin, \emph{Non $ {C}^{1}$ solutions to the special {L}agrangian equation}, arXiv preprint arXiv:2303.14282 (2023).

\bibitem[NV10]{NV}
Nikolai Nadirashvili and Serge Vl\u{a}du\c{t}, \emph{Singular solution to {S}pecial {L}agrangian {E}quations}, Annales de l'I.H.P. Analyse non lin\'eaire \textbf{27} (2010), no.~5, 1179--1188 (en).

\bibitem[Sav07]{Savin}
Ovidiu Savin, \emph{Small perturbation solutions for elliptic equations}, Commmunications in Partial Differential Equations \textbf{32} (2007), no.~4, 557--578.

\bibitem[Sha24]{RSsLag}
Ravi Shankar, \emph{Hessian estimates for special {L}agrangian equation by doubling}, Analysis \& PDE \textbf{to appear} (2024).

\bibitem[SY25]{RY3}
Ravi Shankar and Yu~Yuan, \emph{Hessian estimates for the sigma-2 equation in dimension four}, Annals of Mathematics \textbf{2} (2025), no.~2, 489--513.

\bibitem[TW99]{TW2}
Neil~S. Trudinger and Xu-Jia Wang, \emph{Hessian measures. {II.}}, Ann. of Math. \textbf{150} (1999), no.~2, 579--604.

\bibitem[WY09]{MY2009}
Micah Warren and Yu~Yuan, \emph{Explicit gradient estimates for minimal {L}agrangian surfaces of dimension two}, Mathematische Zeitschrift \textbf{262} (2009), 867--879.

\bibitem[WY10]{WY}
Micah Warren and Yu~Yuan, \emph{Hessian and gradient estimates for three dimensional special {L}agrangian equations with large phase}, American Journal of Mathematics \textbf{132} (2010), no.~3, 751--770.

\bibitem[WY13]{WangY}
Dake Wang and Yu~Yuan, \emph{Singular solutions to special {L}agrangian equations with subcritical phases and minimal surface systems}, American Journal of Mathematics \textbf{135} (2013), no.~5, 1157--1177.

\bibitem[WY14]{WaY}
\bysame, \emph{Hessian estimates for special {L}agrangian equations with critical and supercritical phases in general dimensions}, American Journal of Mathematics \textbf{136} (2014), no.~2, 481--499.

\bibitem[Yua02]{YBern}
Yu~Yuan, \emph{A {B}ernstein problem for special {L}agrangian equations}, Inventiones Mathematicae \textbf{150} (2002).

\bibitem[Yua06]{YY0}
\bysame, \emph{Global solutions to special {L}agrangian equations}, Proceedings of the American Mathematical Society (2006), 1355--1358.

\bibitem[Zho25]{Zhou1}
Xingchen Zhou, \emph{Hessian estimates for {L}agrangian mean curvature equation with sharp {L}ipschitz phase}, Calculus of Variations and Partial Differential Equations \textbf{64} (2025), no.~187.

\end{thebibliography}

\end{document}